\newtheorem{theorem}{Theorem}[section]
\newtheorem{prop}[theorem]{Proposition}
\newtheorem{defn}[theorem]{Definition}
\newtheorem{lemma}[theorem]{Lemma}
\newtheorem{coro}[theorem]{Corollary}
\newtheorem{prop-def}{Proposition-Definition}[section]
\newtheorem{coro-def}{Corollary-Definition}[section]
\newtheorem{remark}[theorem]{Remark}
\newtheorem{exam}[theorem]{Example}
\newcommand{\nc}{\newcommand}
\nc{\tred}[1]{\textcolor{red}{#1}}
\nc{\tblue}[1]{\textcolor{blue}{#1}}
\nc{\tgreen}[1]{\textcolor{green}{#1}}
\nc{\tpurple}[1]{\textcolor{purple}{#1}}
\nc{\btred}[1]{\textcolor{red}{\bf #1}}
\nc{\btblue}[1]{\textcolor{blue}{\bf #1}}
\nc{\btgreen}[1]{\textcolor{green}{\bf #1}}
\nc{\btpurple}[1]{\textcolor{purple}{\bf #1}}
\newcommand{\efootnote}[1]{}
\renewcommand{\textbf}[1]{}
\newcommand{\delete}[1]{}
\nc{\dfootnote}[1]{{}}          %{{}}
\nc{\ffootnote}[1]{\dfootnote{#1}}
\nc{\mfootnote}[1]{\footnote{#1}} % Use this to show footnotes
\nc{\ofootnote}[1]{\footnote{\tiny Older version: #1}}
\nc{\mlabel}[1]{\label{#1}}  % Use this to suppress names
\nc{\mcite}[1]{\cite{#1}}  % Use this to suppress names
\nc{\mref}[1]{\ref{#1}}  % Use this to suppress names
\nc{\mbibitem}[1]{\bibitem{#1}} % Use this to show number
\nc{\mlabel}[1]{\label{#1}  % Use the next two lines to show names
{\hfill \hspace{1cm}{\bf{{\ }\hfill(#1)}}}}
\nc{\mcite}[1]{\cite{#1}{{\bf{{\ }(#1)}}}}  % Use this lines to show names
\nc{\mref}[1]{\ref{#1}{{\bf{{\ }(#1)}}}}  % Use this lines to show names
\nc{\mbibitem}[1]{\bibitem[\bf #1]{#1}} % Use this to show name
\nc{\opa}{\ast} \nc{\opb}{\odot} \nc{\op}{\bullet} \nc{\pa}{\frakL}
\nc{\arr}{\rightarrow} \nc{\lu}[1]{(#1)} \nc{\mult}{\mrm{mult}}
\nc{\diff}{\mathfrak{Diff}}
\nc{\opc}{\sharp}\nc{\opd}{\natural}
\nc{\ope}{\circ}
\nc{\bin}[2]{ (_{\stackrel{\scs{#1}}{\scs{#2}}})}  %binomial coeff
\nc{\binc}[2]{ \left (\!\! \begin{array}{c} \scs{#1}\\
    \scs{#2} \end{array}\!\! \right )}  %binomial coeff
\nc{\bincc}[2]{  \left ( {\scs{#1} \atop
    \vspace{-1cm}\scs{#2}} \right )}  %binomial coeff
\nc{\bs}{\bar{S}} \nc{\cosum}{\sqsubset} \nc{\la}{\longrightarrow}
\nc{\rar}{\rightarrow} \nc{\dar}{\downarrow} \nc{\dprod}{**}
\nc{\dap}[1]{\downarrow \rlap{$\scriptstyle{#1}$}}
\nc{\md}{\mathrm{dth}} \nc{\uap}[1]{\uparrow
\rlap{$\scriptstyle{#1}$}} \nc{\defeq}{\stackrel{\rm def}{=}}
\nc{\disp}[1]{\displaystyle{#1}} \nc{\dotcup}{\
\displaystyle{\bigcup^\bullet}\ } \nc{\gzeta}{\bar{\zeta}}
\nc{\hcm}{\ \hat{,}\ } \nc{\hts}{\hat{\otimes}}
\nc{\barot}{{\otimes}} \nc{\free}[1]{\bar{#1}}
\nc{\uni}[1]{\tilde{#1}} \nc{\hcirc}{\hat{\circ}} \nc{\lleft}{[}
\nc{\lright}{]} \nc{\lc}{\lfloor} \nc{\rc}{\rfloor}
\nc{\curlyl}{\left \{ \begin{array}{c} {} \\ {} \end{array}
    \right .  \!\!\!\!\!\!\!}
\nc{\curlyr}{ \!\!\!\!\!\!\!
    \left . \begin{array}{c} {} \\ {} \end{array}
    \right \} }
\nc{\longmid}{\left | \begin{array}{c} {} \\ {} \end{array}
    \right . \!\!\!\!\!\!\!}
\nc{\onetree}{\bullet} \nc{\ora}[1]{\stackrel{#1}{\rar}}
\nc{\ola}[1]{\stackrel{#1}{\la}}%${\Bbb Z}$
\nc{\ot}{\otimes} \nc{\mot}{{{\boxtimes\,}}}
\nc{\otm}{\overline{\boxtimes}} \nc{\sprod}{\bullet}
\nc{\scs}[1]{\scriptstyle{#1}} \nc{\mrm}[1]{{\rm #1}}
\nc{\margin}[1]{\marginpar{\rm #1}}   %{\rm #1}}
\nc{\dirlim}{\displaystyle{\lim_{\longrightarrow}}\,}
\nc{\invlim}{\displaystyle{\lim_{\longleftarrow}}\,}
\nc{\mvp}{\vspace{0.3cm}} \nc{\tk}{^{(k)}} \nc{\tp}{^\prime}
\nc{\ttp}{^{\prime\prime}} \nc{\svp}{\vspace{2cm}}
\nc{\vp}{\vspace{8cm}} \nc{\proofbegin}{\noindent{\bf Proof: }}
\nc{\proofend}{$\blacksquare$ \vspace{0.3cm}}
\nc{\modg}[1]{\!<\!\!{#1}\!\!>}
\nc{\intg}[1]{F_C(#1)} \nc{\lmodg}{\!
<\!\!} \nc{\rmodg}{\!\!>\!}
\nc{\cpi}{\widehat{\Pi}}
\nc{\sha}{{\mbox{\cyr X}}}  %used to be \cyr
\nc{\shap}{{\mbox{\cyrs X}}} %sha as product
\nc{\shpr}{\diamond}    %Shuffle product
\nc{\shp}{\ast} \nc{\shplus}{\shpr^+}
\nc{\shprc}{\shpr_c}    %Cartier's product
\nc{\msh}{\ast} \nc{\zprod}{m_0} \nc{\oprod}{m_1}
\nc{\vep}{\varepsilon} \nc{\labs}{\mid\!} \nc{\rabs}{\!\mid}
\nc{\mmbox}[1]{\mbox{\ #1\ }} \nc{\fp}{\mrm{FP}}
\nc{\rchar}{\mrm{char}} \nc{\End}{\mrm{End}} \nc{\Fil}{\mrm{Fil}}
\nc{\Mor}{Mor\xspace} \nc{\gmzvs}{gMZV\xspace}
\nc{\gmzv}{gMZV\xspace} \nc{\mzv}{MZV\xspace}
\nc{\mzvs}{MZVs\xspace} \nc{\Hom}{\mrm{Hom}} \nc{\id}{\mrm{id}}
\nc{\im}{\mrm{im}} \nc{\incl}{\mrm{incl}}
\nc{\mchar}{\rm char} \nc{\nz}{\rm NZ} \nc{\supp}{\mathrm Supp}
\nc{\Alg}{\mathbf{Alg}} \nc{\Bax}{\mathbf{Bax}} \nc{\bff}{\mathbf f}
\nc{\bfk}{{\bf k}} \nc{\bfone}{{\bf 1}} \nc{\bfx}{\mathbf x}
\nc{\bfy}{\mathbf y}
\nc{\base}[1]{\bfone^{\otimes ({#1}+1)}} %{{a_{#1}}}
\nc{\Cat}{\mathbf{Cat}}
\nc{\detail}{\marginpar{\bf More detail}
    \noindent{\bf Need more detail!}
    \svp}
\nc{\Int}{\mathbf{Int}} \nc{\Mon}{\mathbf{Mon}}
\nc{\rbtm}{{shuffle }} \nc{\rbto}{{Rota-Baxter }}
\nc{\remarks}{\noindent{\bf Remarks: }} \nc{\Rings}{\mathbf{Rings}}
\nc{\Sets}{\mathbf{Sets}} \nc{\wtot}{\widetilde{\odot}}
\nc{\wast}{\widetilde{\ast}} \nc{\bodot}{\bar{\odot}}
\nc{\bast}{\bar{\ast}} \nc{\hodot}[1]{\odot^{#1}}
\nc{\hast}[1]{\ast^{#1}} \nc{\mal}{\mathcal{O}}
\nc{\tet}{\tilde{\ast}} \nc{\teot}{\tilde{\odot}}
\nc{\oex}{\overline{x}} \nc{\oey}{\overline{y}}
\nc{\oez}{\overline{z}} \nc{\oef}{\overline{f}}
\nc{\oea}{\overline{a}} \nc{\oeb}{\overline{b}}
\nc{\weast}[1]{\widetilde{\ast}^{#1}}
\nc{\weodot}[1]{\widetilde{\odot}^{#1}} \nc{\hstar}[1]{\star^{#1}}
\nc{\lae}{\langle} \nc{\rae}{\rangle}
\nc{\lf}{\lfloor}\nc{\rf}{\rfloor}
\nc{\cala}{{\mathcal A}} \nc{\calb}{{\mathcal B}}
\nc{\calc}{{\mathcal C}}
\nc{\cald}{{\mathcal D}} \nc{\cale}{{\mathcal E}}
\nc{\calf}{{\mathcal F}} \nc{\calg}{{\mathcal G}}
\nc{\calh}{{\mathcal H}} \nc{\cali}{{\mathcal I}}
\nc{\call}{{\mathcal L}} \nc{\calm}{{\mathcal M}}
\nc{\caln}{{\mathcal N}} \nc{\calo}{{\mathcal O}}
\nc{\calp}{{\mathcal P}} \nc{\calr}{{\mathcal R}}
\nc{\cals}{{\mathcal S}} \nc{\calt}{{\mathcal T}}
\nc{\calw}{{\mathcal W}} \nc{\calk}{{\mathcal K}}
\nc{\calx}{{\mathcal X}} \nc{\CA}{\mathcal{A}}
\nc{\fraka}{{\mathfrak a}} \nc{\frakA}{{\mathfrak A}}
\nc{\frakb}{{\mathfrak b}} \nc{\frakB}{{\mathfrak B}}
\nc{\frakc}{{\mathfrak c}}
\nc{\frakD}{{\mathfrak D}} \nc{\frakg}{{\mathfrak g}}
\nc{\frakH}{{\mathfrak H}} \nc{\frakL}{{\mathfrak L}}
\nc{\frakM}{{\mathfrak M}} \nc{\bfrakM}{\overline{\frakM}}
\nc{\frakm}{{\mathfrak m}} \nc{\frakP}{{\mathfrak P}}
\nc{\frakN}{{\mathfrak N}} \nc{\frakp}{{\mathfrak p}}
\nc{\frakS}{{\mathfrak S}}
\font\cyr=wncyr10 \font\cyrs=wncyr7
\begin{document}

\title[Totally compatible dialgebras]{Totally compatible associative and Lie dialgebras, tridendriform algebras and PostLie algebras}
%
%=========================================================================
%
\author{Yong Zhang}
\address{Department of Mathematics, Zhejiang University, Hangzhou 310027, China}
\email{tangmeng@zju.edu.cn}
\author{Chengming Bai}
\address{Chern Institute of Mathematics \& LPMC, Nankai University, Tianjin 300071, China}
         \email{baicm@nankai.edu.cn}
\author{Li Guo}
\address{Department of Mathematics and Computer Science, Rutgers University, Newark, NJ 07102}
\email{liguo@newark.rutgers.edu}

%========================================================================

\date{\today}
%========================================================================

%\begin{document}
%========================================================================
\begin{abstract}
This paper studies the concepts of a totally compatible dialgebra and a totally compatible Lie dialgebra, defined to be a vector space with two binary operations that satisfy individual and mixed associativity conditions and Lie algebra conditions respectively. We show that totally compatible dialgebras are closely related to bimodule algebras and semi-homomorphisms. More significantly, Rota-Baxter operators on totally compatible dialgebras provide a uniform framework to generalize known results that Rota-Baxter related operators give tridendriform algebras. Free totally compatible dialgebras are constructed. We also show that a Rota-Baxter operator on a totally compatible Lie dialgebra gives rise to a PostLie algebra, generalizing the fact that a Rota-Baxter operator on a Lie algebra gives rise to a PostLie algebra.
\end{abstract}

\maketitle

\tableofcontents

\setcounter{section}{0}

%========================================================================

\section{Introduction}
In recent years, there have been quite much interests on (linearly) compatible products from both mathematics and physics. For example two Lie brackets $[\ ,\ ]_1$ and $[\ ,\ ]_2$ on a vector space are called compatible if all linear combination of the two brackets are still Lie brackets. Such structures are studied in~\mcite{GS1,GS2,GS3,OS3} in the contexts of the classical Yang-Baxter equation and principal chiral field, loop algebras over Lie algebras and elliptic theta functions. The corresponding operad is obtained in~\mcite{DK}. Compatible associative products are studied in~\mcite{OS1,OS2,OS4} in connection with Cartan matrices of affine Dynkin diagrams, integrable matrix equations, infinitesimal bialgebras and quiver representations. In this case, the corresponding operad and free objects are obtained in~\mcite{D}. More general compatible products are defined in~\mcite{St}.

In this paper we consider the Koszul dual of compatible associative dialgebra and compatible Lie dialgebras. These dual structures have been
introduced in~\mcite{DK,St}, called totally compatible associative
and Lie dialgebras respectively. Our motivation in studying such structures comes from the
facts~\mcite{Ag,EF} that Rota-Baxter algebras give rise to
dendriform algebras and tridendriform algebras, and the facts that
Rota-Baxter Lie algebras give the structures of pre-Lie and PostLie algebras. This connection has been studied by several subsequent
papers such as ~\mcite{BGN1,BGN4,CM,EG1,Uc} in the associative
algebra case and~\mcite{BGN2,EF,GS,LHB} in the Lie
algebra case. It has been found that some other structures, such as
TD-algebras~\mcite{Le}, also give tridendriform algebras. See Section~\mref{ss:rbtc} for details. Given the importance of the concepts of Rota-Baxter algebras and tridendriform algebras, it is
interesting to ask what kind of Rota-Baxter related structures can give rise to a tridendriform algebra. We show that totally compatible algebra is a more general structure on which a Rota-Baxter operator gives a
tridendriform algebra, allowing us to give a uniform approach that combine known results such as those mentioned above. We also show that a Lie algebraic analogue holds.

In Section~\mref{sec:2alg}, we begin with the concept of a totally compatible dialgebra and its relationship with $A$-bimodule
$\bfk$-algebras and semi-homomorphisms. We then investigate in Section~\mref{ss:rbtc} Rota-Baxter operators on totally compatible dialgebras and establish their close relationship with tridendriform algebras. Free totally
compatible dialgebras are constructed in Section~\mref{sec:free} and
are used to give examples of Rota-Baxter totally compatible
dialgebras.
The concepts of various compatible Lie dialgebras and Rota-Baxter
totally compatible Lie dialgebras are introduced in Section~\mref{sec:rbcld}.
Their relationship with Rota-Baxter compatible dialgebras and PostLie
algebras is established. This relationship refines the already
established relationship~\mcite{BGN2} of Rota-Baxter Lie algebras and PostLie
algebras.

\section{Totally compatible dialgebras and their Rota-Baxter operators}
\mlabel{sec:2alg}

In this section, we consider totally compatible
dialgebras and their Rota-Baxter operators. The relations of Rota-Baxter totally compatible dialgebras with tridendriform algebras, $A$-bimodule $\bfk$-algebras and semi-homomorphisms are established.

\subsection{Totally compatible dialgebras}

Let $\bfk$ be a commutative unitary ring. The tensor product over {\bf k} is denoted by
$\otimes_{{\bf k}}$ or simply by $\otimes$ if it causes no confusion.

\begin{defn}
{\rm
\begin{enumerate}
\item
A {\bf totally compatible (associative) dialgebra (TCDA)} is a $\bf k$-module $R$ with two binary
operations:
$$\opa, \opb: R\otimes R\longrightarrow R,$$
 satisfying the TCDA axioms:
\begin{enumerate}
\item
$\opa$ and $\opb$ are associative.
\mlabel{it:cda1}
\item
\begin{equation}
(a\opa b)\opb c=a\opa(b\opb c)=(a\opb b)\opa c=a\opb(b\opa c),
\quad \forall a,b,c \in R. \mlabel{eq:2ca}
\end{equation}
\mlabel{it:cda2}
\end{enumerate}
\mlabel{it:cda}
\item
Let $(R,\opa,\opb)$ and $(R{'},\opa{'},\opb{'})$ be two
totally compatible dialgebras. A linear map $f:R\rightarrow R{'}$ is a {\bf
homomorphism of totally compatible dialgebras} if $f$ is a $\bfk$-module homomorphism and, for all $a$, $b\in R$,
\begin{equation}
f(a\opa b)=f(a)\opa'f(b)\quad\text{and}\quad
f(a\opb b)=f(a)\opb'f(b).
\end{equation}
\item
A {\bf totally compatible disemigroup} is a set $S$ with two binary operations $\opa,\opb$ on $S$ that satisfy the TCDA axioms in
Items~(\mref{it:cda}).% and (\mref{it:cda2}).
\item
The concept of a homomorphism of totally compatible disemigroups is defined in the same way.
\end{enumerate}
}\mlabel{def:cda}
\end{defn}

The operad of totally compatible dialgebras is denoted by ${}^2As$ in~\mcite{St}. It is the Koszul dual of the operad $As^2$ of compatible dialgebras, defined to be $\bfk$-modules $V$ with binary operations $\opa$ and $\opb$ that are associative and satisfy the relation
$$ (x{\opb}y){\opa} z+(x{\opa} y) {\opb}z
=x{\opa}(y{\opb}z) +x {\opb}(y{\opa}z), \quad \forall x,y,z\in V.$$

A compatible dialgebra is also called an algebra with two compatible associative products~\mcite{D,St}. The term compatible comes from the fact that, for a $\bfk$-module $V$ with two associative products $\opa$ and $\opb$, $(V,\opa,\opb)$ is a compatible dialgebra if and only if linear combinations of $\opa$ and $\opb$ are still associative.

It is easy to see that if $(R,\opa,\opb)$ is a totally compatible dialgebra, then, for any $r,s\in \bfk$, the triple $(R,r\opa, s\opb)$ is again a totally compatible dialgebra.
It is also easy to check that the tensor product of two totally compatible dialgebras is
naturally a totally compatible dialgebra. Similarly, let $\mathcal{M}_n(R)$
be the $\bfk$-module of $n\times n$-matrices $\alpha:=(\alpha_{ij})$ with entries $\alpha_{ij}, 1\leq i,j\leq n,$ in a totally compatible
dialgebra $(R,\opa,\opb)$. For $\alpha=(\alpha_{ij}), \beta=(\beta_{ij}) \in \calm_n(R)$, define
$$\alpha\tilde{\opa} \beta:=((\alpha\tilde{\opa}\beta)_{ij}), \quad \alpha\tilde{\opb}\beta:=((\alpha\tilde{\opb}\beta)_{ij}),$$
where
$$(\alpha\tilde{\opa}\beta)_{ij}=\sum_{k=1}^n \alpha_{ik}\opa
\beta_{kj}\,\,\text{and}\,\, (\alpha\tilde{\opb}\beta)_{ij}=\sum_{k=1}^n
\alpha_{ik}\opb \beta_{kj}.
$$
Then $(\mathcal{M}_n(A),\tilde{\opa},\tilde{\opb})$ is a totally compatible dialgebra.

\subsection{Totally compatible dialgebras, $A$-bimodule $\bfk$-algebras and semi-homomorphisms}
We now study the relationship between totally compatible dialgebras and $A$-bimodule $\bfk$-algebra introduced in~\mcite{BGN1} and semi-homomorphisms.

\subsubsection{$A$-bimodule $\bfk$-algebras}
\begin{defn}
{\rm Let $(A, \opa)$ be a {\bf k}-algebra with multiplication
$\opa$.
Let $(R, \ope)$ be a $\bf k$-algebra with multiplication $\ope$. Let
$\ell,r: A \rightarrow \End_{k}(R)$ be two linear maps. We call $(R,\opb,
\ell, r)$ or simply $R$ an {\bf $A$-bimodule $\bf k$-algebra} if $(R,\ell,
r)$ is an $A$-bimodule that is compatible with the multiplication
$\ope$ on $R$ in the sense that the following equations hold.

\begin{equation}
\ell(x \opa y)v=\ell(x)(\ell(y)v), \,\,
vr(x\opa y)=(vr(x))r(y), \,\,
(\ell(x)v)r(y)= \ell(x)(vr(y)),
\mlabel{eq:bim1}
\end{equation}
\begin{equation}
\ell(x)(v\ope
w)=(\ell(x)v)\ope w, \,\,
(v\ope
w)r(x)=v\ope(wr(x)),\,\, \,\,
(vr(x))\ope w=v\ope(\ell(x)w),
\mlabel{eq:bim2}
\end{equation}
for all $x, y \in A, v, w \in R.$
}\mlabel{def:bim}
\end{defn}

\begin{prop} {\rm (\cite{Bai,BGN1})}Let $(A, \opa)$ be a {\bf k}-algebra. Then $(R,\ope, \ell,r)$ is an $A$-bimodule $\bfk$-algebra if and
only if the direct sum $A\oplus R$ of
$\bfk$-modules is turned into a $\bfk$-algebra (the semidirect
sum) by defining multiplication in $A\oplus R$ by
\begin{equation}
(x_1,v_1)\star (x_2,v_2)=(x_1\opa x_2,
\ell(x_1)v_2+v_1r(x_2)+v_1\ope v_2),\;\;\forall x_1,x_2\in A,
v_1,v_2\in R.
\notag %\mlabel{eq:semidir}
\end{equation}
\mlabel{co:twoalg}
\end{prop}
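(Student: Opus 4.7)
\proofbegin
The proof is a direct verification that associativity of the semidirect product $\star$ on $A\oplus R$ decomposes precisely into the seven conditions: associativity of $\opa$ on $A$, associativity of $\ope$ on $R$, the three bimodule axioms in~\eqref{eq:bim1}, and the three compatibility axioms in~\eqref{eq:bim2}. The plan is to expand
$$((x_1,v_1)\star(x_2,v_2))\star(x_3,v_3) \quad \text{and} \quad (x_1,v_1)\star((x_2,v_2)\star(x_3,v_3))$$
using the given formula, and then compare the two outputs componentwise.

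The first components yield $(x_1\opa x_2)\opa x_3$ and $x_1\opa(x_2\opa x_3)$, so associativity of $\star$ is equivalent to associativity of $\opa$ together with an identity in $R$ obtained from the second components. Expanding the second components, the left-hand side is
$$\ell(x_1\opa x_2)v_3 + (\ell(x_1)v_2)r(x_3) + (v_1 r(x_2))r(x_3) + (v_1\ope v_2)r(x_3) + (\ell(x_1)v_2)\ope v_3 + (v_1 r(x_2))\ope v_3 + (v_1\ope v_2)\ope v_3,$$
and the right-hand side is
$$\ell(x_1)(\ell(x_2)v_3) + \ell(x_1)(v_2 r(x_3)) + \ell(x_1)(v_2\ope v_3) + v_1 r(x_2\opa x_3) + v_1\ope(\ell(x_2)v_3) + v_1\ope(v_2 r(x_3)) + v_1\ope(v_2\ope v_3).$$

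The forward direction is immediate: assuming $(R,\ope,\ell,r)$ is an $A$-bimodule $\bfk$-algebra, each of the seven pairs of corresponding terms is equal by~\eqref{eq:bim1}, \eqref{eq:bim2}, the associativity of $\ope$ on $R$, and the associativity of $\opa$ on $A$. For the reverse direction, assume $\star$ is associative. By specializing inputs one isolates each axiom: taking $v_1=v_2=0$ extracts $\ell(x_1\opa x_2)v_3=\ell(x_1)(\ell(x_2)v_3)$; taking $v_2=v_3=0$ extracts $v_1 r(x_2\opa x_3)=(v_1 r(x_2))r(x_3)$; taking $v_1=v_3=0$ extracts $(\ell(x_1)v_2)r(x_3)=\ell(x_1)(v_2 r(x_3))$; taking $x_1=x_2=0$ extracts associativity of $\ope$; taking $x_1=v_3=0$ gives $(v_1\ope v_2)r(x_3)=v_1\ope(v_2 r(x_3))$; taking $x_3=v_1=0$ gives $(\ell(x_1)v_2)\ope v_3=\ell(x_1)(v_2\ope v_3)$; and taking $x_1=x_3=0$ gives $(v_1 r(x_2))\ope v_3 = v_1\ope(\ell(x_2)v_3)$. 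Finally, setting all $v_i=0$ gives the associativity of $\opa$.

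The only real delicacy is the bookkeeping: one must verify that after the chosen specializations all other terms on both sides cancel, so that the surviving identity is the advertised one. This is routine but requires listing the seven terms carefully as above; no conceptual obstacle arises, and both implications follow from the same expansion.
\proofend
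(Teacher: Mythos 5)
Your proof is correct and is the standard direct verification: the paper itself gives no proof of this proposition (it simply cites \cite{Bai,BGN1}), and the term-by-term comparison of the two associations of $\star$, decomposing into associativity of $\opa$, associativity of $\ope$, and the six axioms of Eqs.~(\mref{eq:bim1})--(\mref{eq:bim2}), is exactly the intended argument. One small correction in the converse direction: the specialization $x_1=x_2=0$ does not isolate associativity of $\ope$ alone, since the terms $(v_1\ope v_2)r(x_3)$ and $v_1\ope(v_2 r(x_3))$ also survive, so you should take $x_1=x_2=x_3=0$ there; similarly, several of your other specializations (e.g.\ $x_1=v_3=0$ and $x_3=v_1=0$) leave behind one extra pair of terms that must be cancelled using an identity already established earlier in the list, which is the bookkeeping you acknowledge in your final paragraph but should carry out in the stated order.
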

We denote this algebra by $A\ltimes_{\ell,r}R$ or simply $A\ltimes R$.

\begin{prop}
Let $(A,\opa,\opb)$ be a totally compatible dialgebra. Define left and right actions by
$$L_\opa(x): A\rightarrow A, L_\opa(x)(y):=x\opa y, $$
$$R_\opa(x): A\rightarrow A, R_\opa(x)(y):=y\opa x,
\,\,\forall x,y \in A.$$
Then the quadruple $(A,\opb,L_\opa,R_\opa)$ is an $A$-bimodule $\bf k$-algebra.
Conversely, if $(A,\opa)$ is a $\bfk$-algebra and $(A,\opb,L_\opa,R_\opa)$ is an $A$-bimodule $\bfk$-algebra
with the left and right actions $L_\opa$ and $R_\opa$ defined above, then the triple $(A,\opa,\opb)$ is
a totally compatible dialgebra. \mlabel{prop:bim}
\end{prop}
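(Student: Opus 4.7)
The strategy is to unpack the $A$-bimodule $\bfk$-algebra axioms with the specialization $R = A$, $\ope = \opb$, $\ell = L_\opa$, $r = R_\opa$, and match them term-by-term with the TCDA axioms. First I would note that the three equations in the bimodule condition (\mref{eq:bim1}), evaluated on an arbitrary $v\in A$, become $(x\opa y)\opa v = x\opa(y\opa v)$, $v\opa(x\opa y) = (v\opa x)\opa y$, and $(x\opa v)\opa y = x\opa(v\opa y)$, each of which is simply the associativity of $\opa$. Thus the bimodule structure equations are equivalent to $\opa$ being associative (which is also what is required for $(A,\opa)$ to be a $\bfk$-algebra in the first place).

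Next I would translate the compatibility equations (\mref{eq:bim2}) with the same substitutions into the three identities (A) $x\opa(v\opb w) = (x\opa v)\opb w$, (B) $(v\opb w)\opa x = v\opb(w\opa x)$, and (C) $(v\opa x)\opb w = v\opb(x\opa w)$. Relabeling the triple of arguments so that they appear in the natural order $(a,b,c)$ in each identity (so $(x,v,w)\mapsto(a,b,c)$ in (A), $(v,w,x)\mapsto(a,b,c)$ in (B), $(v,x,w)\mapsto(a,b,c)$ in (C)), these become respectively $a\opa(b\opb c) = (a\opa b)\opb c$, $(a\opb b)\opa c = a\opb(b\opa c)$, and $(a\opa b)\opb c = a\opb(b\opa c)$. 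Chaining these three equalities yields the quadruple identity (\mref{eq:2ca}), and conversely any three of the pairwise equalities extracted from (\mref{eq:2ca}) recover (A), (B), (C). Associativity of $\opb$ is built into the assumption that $(A,\opb)$ is a $\bfk$-algebra on one side, and appears as the $\opb$-part of axiom (\mref{it:cda1}) on the TCDA side.

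Combining the two translations gives both implications of the equivalence. The whole argument is routine rewriting with no genuine obstacle; the only point that deserves care is the bookkeeping in the variable relabelings, since one must correctly pair each of the three compatibility equations in (\mref{eq:bim2}) with the correct one of the three pairwise equalities hidden inside the four-way identity (\mref{eq:2ca}).
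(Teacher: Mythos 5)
Your proposal is correct and follows essentially the same route as the paper's proof: both directions are obtained by specializing the $A$-bimodule $\bfk$-algebra axioms to $R=A$, $\ope=\opb$, $\ell=L_\opa$, $r=R_\opa$, observing that Eq.~(\mref{eq:bim1}) reduces to associativity of $\opa$, and matching the three identities from Eq.~(\mref{eq:bim2}) with the pairwise equalities inside Eq.~(\mref{eq:2ca}). Your explicit remark that the three translated identities chain together to yield the four-way identity (and conversely) is exactly the content of the paper's computation, just stated slightly more systematically.
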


\begin{proof}
By the definition of $A$-bimodule $\bf k$-algebra, it suffices to verify the equations in Definition~\mref{def:bim}. It is well-known that, for the associative $\bfk$-algebra $(A,\opa)$, $(A,L_\opa,R_\opa)$ is an $A$-bimodule. Hence the equations in Eq.~(\mref{eq:bim1}) hold. For the equations in Eq.~(\mref{eq:bim2}), we check that

\begin{eqnarray*}
L_\opa(x)(y\opb z)&=&x\opa(y\opb z)=(x\opa
y)\opb z=(L_\opa(x)y)\opb z,
\\
(x\opb y)R_\opa(z)&=&(x\opb y)\opa z=x\opb(y\opa
z)=x\opb(yR_\opa(z)),
\\
(xR_\opa(y))\opb z&=&(x\opa y)\opb z=x\opb(y\opa
z)=x\opb(L_\opa(y)z).
\end{eqnarray*}

Conversely, if $A$ is an $A$-bimodule $\bf k$-algebra, then we have
\begin{eqnarray*}
x\opa(y\opb z)&=&L_\opa(x)(y\opb z)=(L_\opa(x)y)\opb z=(x\opa y)\opb z,\\
(x\opb y)\opa z&=&(x\opb y)R_\opa(z)=x\opb(yR_\opa(z))=x\opb(y\opa
z),\\
(x\opa y)\opb z&=&(xR_\opa(y))\opb z=x\opb(L_\opa(y)z)=x\opb(y\opa z).
\end{eqnarray*}
Since $\opa$ and $\opb$ are already associative, $(A,\opa,\opb)$ is a totally compatible dialgebra.
\end{proof}

\begin{coro} With the conditions as above, the following conditions are equivalent:
\begin{enumerate}
\item $(A,\opa,\opb)$ is a totally compatible dialgebra.
\mlabel{it:cb1}
\item $(A,\opb,L_\opa,R_\opa)$ is an $A$-bimodule $\bf k$-algebra.
\mlabel{it:cb2}
\item There is a $\bfk$-algebra structure $A\ltimes_{L_\opa,R_\opa}A$ on the $\bfk$-module
$A\oplus A$ defined by
\begin{equation} (x,y)\star (z,w)=(x\opa z, x\opa w+y\opa z+y\opb w),\;\;\forall x,y,z,w\in A.
\mlabel{eq:semi}
\end{equation}
\mlabel{it:cb3}
\end{enumerate}
\mlabel{co:semi}
\end{coro}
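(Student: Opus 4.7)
The plan is to obtain the three-way equivalence by chaining together the two propositions already established in this subsection, so no fresh calculation is really required.

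First, the equivalence of (\mref{it:cb1}) and (\mref{it:cb2}) is nothing but a restatement of Proposition~\mref{prop:bim}: that proposition proves both implications between a totally compatible dialgebra structure $(A,\opa,\opb)$ and the $A$-bimodule $\bfk$-algebra structure $(A,\opb,L_\opa,R_\opa)$ built from the regular left and right actions of $(A,\opa)$ on itself. So I would just cite it.

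Next, for the equivalence of (\mref{it:cb2}) and (\mref{it:cb3}), I would invoke Proposition~\mref{co:twoalg} with the choices $R=A$, $\ope=\opb$, $\ell=L_\opa$ and $r=R_\opa$. That proposition asserts precisely that $(A,\opb,L_\opa,R_\opa)$ is an $A$-bimodule $\bfk$-algebra if and only if $A\oplus A$ carries a $\bfk$-algebra structure under the semidirect multiplication
\[
(x_1,v_1)\star(x_2,v_2)=\bigl(x_1\opa x_2,\;\ell(x_1)v_2+v_1\,r(x_2)+v_1\ope v_2\bigr).
\]
To close the argument, I would just substitute the definitions $\ell(x_1)v_2=L_\opa(x_1)v_2=x_1\opa v_2$ and $v_1\,r(x_2)=v_1R_\opa(x_2)=v_1\opa x_2$ into the right-hand side to verify that this formula coincides with Eq.~(\mref{eq:semi}) after relabelling $(x_1,v_1,x_2,v_2)=(x,y,z,w)$.

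There is really no main obstacle here, since the work has already been done in Propositions~\mref{co:twoalg} and~\mref{prop:bim}; the only thing to be careful about is the bookkeeping in matching the general semidirect-sum multiplication of Proposition~\mref{co:twoalg} with the specific formula~(\mref{eq:semi}), to make sure the three summands $x\opa w$, $y\opa z$ and $y\opb w$ correspond respectively to the left action $\ell(x_1)v_2$, the right action $v_1r(x_2)$ and the internal product $v_1\ope v_2$.
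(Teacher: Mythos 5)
Your proposal is correct and matches the paper's own proof exactly: the paper likewise cites Proposition~\ref{prop:bim} for the equivalence of (\ref{it:cb1}) and (\ref{it:cb2}) and Proposition~\ref{co:twoalg} for the equivalence of (\ref{it:cb2}) and (\ref{it:cb3}). Your extra remark about matching the general semidirect-sum formula with Eq.~(\ref{eq:semi}) is the right bookkeeping and is left implicit in the paper.
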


\begin{proof}
The equivalence between Item~(\mref{it:cb1}) and Item~(\mref{it:cb2}) is just Proposition~\mref{prop:bim}. The equivalence between Item~(\mref{it:cb2}) and
Item~(\mref{it:cb3}) follows from Proposition~\mref{co:twoalg}.
\end{proof}

\subsubsection{Semi-homomorphisms}

\begin{defn}{\rm Let $(A,\cdot)$ be a $\bfk$-algebra. A linear transformation $f:A\rightarrow A$
is called {\bf semi-homomorphism} of $A$ if $f$ satisfies
\begin{equation}
f(x\cdot y)=x\cdot f(y)=f(x)\cdot y,\forall x,y\in A.
\mlabel{eq:shom}
\end{equation}
}
\end{defn}
The set of all semi-homomorphisms is called the centroid of $A$.

\begin{prop}
Let $(A,\cdot)$ be a $\bfk$-algebra and let $f,g$ be commuting semi-homomorphisms on $A$: $fg=gf$.
Define
\begin{equation}
x\opa y:=f(x)\cdot y(=f(x\cdot y)=x\cdot f(y))\;\;\text{and}\;\;
x\opb y:=g(x)\cdot y(=g(x\cdot y)=x\cdot g(y))\;\;\forall\, x,y\in A.
\mlabel{eq:con}\end{equation}
Then $(A,\opa,\opb)$ is a totally compatible dialgebra.
In particular, for any semi-homomorphism $f$ on $A$, define
\begin{equation}
x\opa y:=x\cdot y, \quad x\opb y:=f(x)\cdot y,\quad \forall\, x,y\in A.
\mlabel{eq:con2}\end{equation}
Then $(A,\opa,\opb)$ is a totally compatible dialgebra.
\mlabel{pp:con}
\end{prop}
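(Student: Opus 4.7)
The plan is to exploit the two-sided form of the semi-homomorphism identity $f(x\cdot y)=f(x)\cdot y=x\cdot f(y)$ (and its analogue for $g$), which means that $f$ and $g$ can be pushed freely across the multiplication $\cdot$. Once this move is used, every iterated product of three elements under $\opa$ and $\opb$ collapses to a canonical form in which $f$ and $g$ sit, one each, on the first two entries, and the TCDA axioms can then be read off directly.

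First I would verify associativity of $\opa$. Expanding $(x\opa y)\opa z$ and $x\opa(y\opa z)$ and using the semi-homomorphism identity to move $f$ across $\cdot$, both sides reduce to $f(x)\cdot f(y)\cdot z$. The same computation with $g$ in place of $f$ yields associativity of $\opb$. This disposes of condition~(a) of Definition~\mref{def:cda}(\mref{it:cda1}).

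For the mixed identity~\mref{eq:2ca}, I would compute all four expressions in parallel. Two of them, $(a\opa b)\opb c$ and $a\opa(b\opb c)$, collapse to $f(a)\cdot g(b)\cdot c$, while the other two, $(a\opb b)\opa c$ and $a\opb(b\opa c)$, collapse to $g(a)\cdot f(b)\cdot c$. What remains is the equality $f(a)\cdot g(b)=g(a)\cdot f(b)$, which follows from applying the semi-homomorphism identity twice and using the commutation $fg=gf$:
\[
f(a)\cdot g(b)=a\cdot f(g(b))=a\cdot g(f(b))=g(a)\cdot f(b).
\]

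This last step is the only place where the hypothesis $fg=gf$ enters, and it is the essential content of the proof; without commutation the equality $(a\opa b)\opb c=(a\opb b)\opa c$ would fail in general, so this is the step I expect to require the most care in a clean write-up. Finally, the special case~\mref{eq:con2} is immediate by taking $g=\id_A$, which is a semi-homomorphism of $(A,\cdot)$ and commutes with every linear map $f$, so the general statement applies.
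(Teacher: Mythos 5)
Your proposal is correct and follows essentially the same route as the paper: push $f$ and $g$ across $\cdot$ via the two-sided semi-homomorphism identity so that every triple product collapses to $f(a)\cdot g(b)\cdot c$ or $g(a)\cdot f(b)\cdot c$, and use $fg=gf$ exactly once to identify these two normal forms. One cosmetic remark: for Eq.~(\mref{eq:con2}) you should take $f=\id_A$ and $g$ equal to the given semi-homomorphism (rather than $g=\id_A$) to match the roles of $\opa$ and $\opb$ as written, though the TCDA axioms are symmetric in the two products so your choice also works.
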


\begin{proof}
It is straightforward to check that both $(A,\opa)$ and $(A,\opb)$ are $\bfk$-algebras. Let $x, y, z$ be in $A$. Then we have
{\allowdisplaybreaks
\begin{eqnarray*}
&&(x\opa y)\opb z=g(f(x)\cdot y)\cdot z=f(x)\cdot g(y)\cdot z=x\opa (y\opb z);\\
&&(x\opb y)\opa z=f(g(x)\cdot y)\cdot z=g(x)\cdot f(y)\cdot z=x\opb (y\opa z);\\
&&x\opa(y\opb z)=f(x)\cdot g(y)\cdot z=x\cdot fg(y)\cdot z=x\cdot gf(y)\cdot z
=g(x)\cdot f(y)\cdot z=x\opb(y\opa z).
\end{eqnarray*}
}
Therefore $(A,\opa,\opb)$ is a totally compatible dialgebra.

To prove the last statement, we just note that the identity map $\id$ and $f$ are commuting semi-homomorphisms on $A$.
\end{proof}

\begin{exam}
{\rm Let $(A,\opa)$ be a $\bfk$-algebra.
\begin{enumerate}
\item The identity map ${\rm Id}$ on $(A,\opa)$ is obviously a semi-homomorphism. The corresponding totally compatible dialgebra is $(A,\opa,\opa)$.
\item Let $w$ be in the center of $A$ and define $f(x)=x\opa w$ for all $x\in A$. Then $f$ is a semi-homomorphism. Thus by Proposition~\mref{pp:con}, for
$$x\opb y:=x\opa w\opa y(=w\opa x\opa y=x\opa y\opa w), \;\;\forall x,y\in A, $$
the triple $(A,\opa,\opb)$ is a totally compatible dialgebra. This gives another way to obtain the totally compatible dialgebra in Lemma~\mref{lem:ctd}.(\mref{it:rtcd2}). See Remark~\mref{rk:rhom}.
\mlabel{it:chom}
\end{enumerate}}
\mlabel{ex:shom}
\end{exam}

\section{Rota-Baxter totally compatible dialgebras and tridendriform algebras}
\mlabel{ss:rbtc}
In this section we study the close relationship between Rota-Baxter totally compatible dialgebra and tridendriform algebras.
We begin with recalling the basis concepts and results.

\begin{defn}
{\rm
\begin{enumerate}
\item {\rm (\mcite{Ba,Gub,GK,Ro1})}
Let $\lambda\in\bfk$ be given. A linear operator $P$ on a $\bfk$-algebra $R$ is called a {\bf Rota-Baxter operator of weight $\lambda$} if
\begin{equation}
P(x)P(y)=P(xP(y))+P(P(x)y)+P(\lambda xy), \quad \forall x,y\in R,
\mlabel{eq:rbo}
\end{equation}
Then the pair $(R,P)$ is called a {\bf Rota-Baxter algebra of weight $\lambda$}.
\item
{\bf (\mcite{LR1})} A {\bf
tridendriform algebra} is a quadruple $(T,\prec,\succ,\cdot)$
consisting of a $\bfk$-module $T$ and three bilinear products
$\prec$, $\succ$ and $\cdot$ such that \allowdisplaybreaks{
\begin{eqnarray}
&&(x\prec y)\prec z=x\prec (y\star z),\
(x\succ y)\prec z=x\succ (y\prec z),\notag \\
&&(x\star y)\succ z=x\succ (y\succ z),\
(x\succ y)\cdot z=x\succ (y\cdot z),\
\mlabel{eq:tri}\\
&&(x\prec y)\cdot z=x\cdot (y\succ z),\
(x\cdot y)\prec z=x\cdot (y\prec z),\
(x\cdot y)\cdot z=x\cdot (y\cdot z) \notag
\end{eqnarray}
} for all $x,y,z\in T$. Here $\star=\prec+\succ+\cdot.$
\end{enumerate}
}\mlabel{def:dtalg}
\end{defn}

It has been established~\mcite{Ag,EF} that if $P$ is a Rota-Baxter operator of weight $\lambda$ on a $\bfk$-algebra $R$,
then
$$ x\prec_P y:=xP(y),\  x\succ_P y:=P(x)y,\  x\cdot_P y:= \lambda xy, \quad \forall x, y\in R,$$
define a tridendriform algebra structure on $R$. It has been found later that some other Rota-Baxter algebra type structures also give tridendriform algebras. For example, a {\bf TD operator} $P$ on a unitary $\bfk$-algebra $R$ is a linear operator $P:R\to R$ such that
$$P(x)P(y)=P(xP(y)) + P(P(x)y)-P(xP(1)y), \forall x,y\in R.$$
By~\cite[Proposition~2.5]{Le}, a TD operator $T$ also gives a dendriform trialgebra by
$$ x\prec_P y:=xP(y),\  x\succ_P y:=P(x)y,\  x\cdot_P y:= -xP(1)y, \quad \forall x, y\in R.$$
As another example, for a given $w\in R$, define a {\bf Rota-Baxter operator of weight $w$} to be a linear operator $P:R\to R$ such that
$$ P(x)P(y)=P(xP(y))+P(P(x)y) + P(xwy), \quad \forall x, y\in R.$$
Suppose $w$ is in the center of $R$. Then
$$ x\prec_P y:=xP(y), x\succ_P y:=P(x)y, x\cdot_P y:= xwy, \quad \forall x, y\in R,$$
define a tridendriform algebra structure on $R$.

In each of the above cases, the products $xP(1)y$ (resp. $xwy$) are derived from the default product of $R$. Our motivation is to consider a new product on $R$ and study its ``compatibility" conditions with the default product on $R$ in order to still obtain a tridendriform algebra from a Rota-Baxter operator. As we will see in Theorem~\mref{thm:rtca}, such a structure is exactly the totally compatible dialgebra.

We now consider variations of a totally compatible dialgebra that allow a Rota-Baxter operator.
\begin{defn}{\rm
\begin{enumerate}
\item
A {\bf restricted Rota-Baxter totally compatible dialgebra} is a quadruple $(R,\opa,\opb,P)$ where $R$ is a $\bfk$-module, $\opa,\opb$ are associative multiplications on $R$, $P:R\to R$ is a linear operator, satisfying the following compatibility conditions.
\begin{equation}
(P(x)\opa y)\opb z = P(x)\opa (y\opb z), \
(x\opa P(y))\opb z = x\opb (P(y)\opa z), \
(x\opb y) \opa P(z)=x \opb (y \opa P(z)),
\mlabel{eq:rda}
\end{equation}
\begin{equation}
P(x)\opa P(y)=P(x\opa P(y))+P(P(x)\opa y)+P(x\opb y), \quad \forall x, y, z\in R.
\mlabel{eq:rrbe}
\end{equation}
\item
Let $(R,\opa,\opb)$ be a totally compatible dialgebra. A linear operator $P:R
\to R$ is called a {\bf Rota-Baxter
operator} if
$$P(x)\opa P(y)=P(x\opa P(y))+P(P(x)\opa y)+P(x\opb y), \quad \forall x,y\in R.$$
If the equation holds, then $(R,\opa,\opb,P)$ is called a {\bf Rota-Baxter totally compatible dialgebra}.
\end{enumerate}
\mlabel{def:rb2a} }
\end{defn}
\begin{remark}
{\rm
\begin{enumerate}
\item
Note that we did not assign a weight to the Rota-Baxter operator in Definition~\mref{def:rb2a}. This is because the effect of a weight can be achieved by a variation of the product $\opb$: instead of considering $\lambda x\opb y$, we could consider $x \opb' y$ with $x\opb' y:=\lambda x\opb y$. Such an instance can be found in Proposition~\mref{pp:fcdf}.
\item
It follows from the definitions that a Rota-Baxter totally compatible dialgebra is a restricted Rota-Baxter totally compatible dialgebra.
If $P$ is surjective, then the two concepts agree with each other.
\end{enumerate}
}
\end{remark}

The following lemma shows that the concept of restricted Rota-Baxter totally compatible dialgebra gives a suitable context to combine known cases of Rota-Baxter type operators that give tridendriform algebras. See also Corollary~\mref{co:ctd}.

\begin{lemma}
\begin{enumerate}
\item
Let $R$ be a $\bfk$-algebra, let $w$ be in $R$ and let $P:R\to R$ be a linear operator. Suppose $wP(x)=P(x)w$ for all $x\in R$ and
$$P(x)P(y)=P(xP(y)+P(x)y+xwy), \quad \forall x,y\in R.$$
Define
\begin{equation}
 x\opa y:=xy,\quad x\opb y:=xwy, \quad \forall x,y\in R.
\mlabel{eq:wprod}
\end{equation}
Then the quadruple $(R,\opa,\opb,P)$ is a restricted Rota-Baxter totally compatible
dialgebra.
\mlabel{it:rtcd1}
\item
Let $R$ be a $\bfk$-algebra and let $w\in R$ be in the center of $R$. Let $P:R\to R$ be a Rota-Baxter operator of weight $w$. Then the quadruple $(R,\opa,\opb,P)$ with $\opa,\opb$ defined in Eq.~(\mref{eq:wprod})
is a restricted Rota-Baxter totally compatible dialgebra. \mlabel{it:rtcd2}
\item
Let $R$ be a unitary $\bfk$-algebra and let $P:R\to R$ be a TD operator. Then the quadruple $(R,\opa,\opb,P)$ where
$$ x\opa y:= xy, \ x\opb y:= xP(1)y, \quad \forall x,y\in R,$$
is a restricted Rota-Baxter totally compatible dialgebra.
\mlabel{it:rtcd3}
\end{enumerate}
\mlabel{lem:ctd}
\end{lemma}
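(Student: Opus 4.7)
The plan is to verify, for each of the three items, (i) associativity of $\opa$ and $\opb$, (ii) the three compatibility equations of Eq.~(\mref{eq:rda}), and (iii) the Rota-Baxter-type identity Eq.~(\mref{eq:rrbe}). In every case $\opa$ is the ambient multiplication on $R$ and $\opb$ is a sandwich product $x\opb y = xay$ for some fixed $a\in R$ (with $a=w$ in items~(\mref{it:rtcd1})--(\mref{it:rtcd2}) and $a=P(1)$ in item~(\mref{it:rtcd3})), so associativity of both products is automatic from that of $R$; the actual work consists only of (ii) and (iii).

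For item~(\mref{it:rtcd1}), the first and third equations of Eq.~(\mref{eq:rda}) reduce to $(P(x)y)wz = P(x)(ywz)$ and $(xwy)P(z) = xw(yP(z))$, which are pure associativity in $R$. The middle equation reduces to $xP(y)wz = xwP(y)z$, which is exactly the hypothesis $wP(y)=P(y)w$. For the Rota-Baxter identity, expanding Eq.~(\mref{eq:rrbe}) gives $P(x)P(y)=P(xP(y))+P(P(x)y)+P(xwy)$, which follows from the single hypothesis $P(x)P(y)=P(xP(y)+P(x)y+xwy)$ by $\bfk$-linearity of $P$. Item~(\mref{it:rtcd2}) then falls out as a special case: a central $w$ certainly satisfies $wP(x)=P(x)w$, and the Rota-Baxter axiom of weight $w$ together with linearity of $P$ produces exactly the identity needed.

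For item~(\mref{it:rtcd3}) the same template works once one knows that $P(1)$ commutes with the image of $P$, which is the only substantive observation. I would extract this from the TD axiom itself: setting $y=1$ and $x=1$ in the TD identity yields
$P(x)P(1) = P(xP(1)) + P(P(x)) - P(xP(1)) = P(P(x))$
and $P(1)P(x) = P(P(x)) + P(P(1)x) - P(P(1)x) = P(P(x))$,
so $P(1)P(x)=P(x)P(1)$. Granted this, the first and third compatibility equations in Eq.~(\mref{eq:rda}) again reduce to associativity in $R$, while the middle one reduces to $xP(y)P(1)z = xP(1)P(y)z$, which is precisely the commutation just derived. The Rota-Baxter identity Eq.~(\mref{eq:rrbe}) becomes $P(x)P(y)=P(xP(y))+P(P(x)y)+P(x\opb y)$, which matches the TD identity; the signs on $P(xP(1)y)$ align with Eq.~(\mref{eq:rrbe}) only if one reads $x\opb y = -xP(1)y$, and I would flag this sign as a presentational point when writing the formal proof.

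The expected main obstacle is modest: it is the commutation $P(1)P(x)=P(x)P(1)$ in item~(\mref{it:rtcd3}), which is not among the assumptions and must be extracted from the TD axiom before the compatibility equations can be verified. Everything else is routine associative-algebra bookkeeping together with the linearity of $P$.
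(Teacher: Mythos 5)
Your proposal is correct and follows essentially the same route as the paper: associativity is automatic, Eq.~(\mref{eq:rda}) reduces to associativity in $R$ plus the commutation $wP(x)=P(x)w$, Eq.~(\mref{eq:rrbe}) is just linearity of $P$, item~(\mref{it:rtcd2}) is a special case, and item~(\mref{it:rtcd3}) hinges on extracting $P(1)P(x)=P(x)P(1)$ from the TD axiom exactly as the paper does (both sides equal $P^2(x)$). Your flag about the sign in item~(\mref{it:rtcd3}) is well taken and is in fact sharper than the paper's own proof, which simply says ``take $w=P(1)$'' even though the TD identity carries $-P(xP(1)y)$, so one must take $w=-P(1)$ (equivalently $x\opb y=-xP(1)y$) for the hypothesis of item~(\mref{it:rtcd1}) to match.
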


\begin{remark}
{\rm The triple $(R,\opa,\opb)$ in Item~(\mref{it:rtcd2}) is in fact a totally compatible dialgebra, as observed in Example~\mref{ex:shom}.(\mref{it:chom}), from semi-homomorphisms. }
\mlabel{rk:rhom}
\end{remark}

\begin{proof} (\mref{it:rtcd1})
The multiplications $\opa$ and $\opb$ are associative by their definitions. To verify Eq.~(\mref{eq:rda}) we compute
\begin{eqnarray*}
(P(x)\opa y)\opb z&=& (P(x)y)wz = P(x)(ywz)=P(x)\opa (y\opb z),\\
(x\opa P(y))\opb z &=& xP(y)wz=xwP(y)z=x\opb (P(y)\opa z), \\
(x\opb y)\opa P(z)&=& (xwy)P(z)=xw(yP(z))=x\opb (y\opa P(z)),
\quad \forall x,y,z\in R.
\end{eqnarray*}
Eq.~(\mref{eq:rrbe}) is automatic. Therefore $(R,\opa,\opb,P)$ is a restricted Rota-Baxter totally compatible dialgebra.

\smallskip

\noindent
(\mref{it:rtcd2}) This is a direct consequence of Item~(\mref{it:rtcd1}).
\smallskip

\noindent
(\mref{it:rtcd3}) For a TD operator $P$, we have~\mcite{Le}
$$
P(x)P(1)=P(xP(1)+P(x)-xP(1))=P^2(x)=P(P(x)+P(1)x-P(1)x))=P(1)P(x), \quad \forall x\in R.
$$
Thus we just need to take $w=P(1)$ in Item~(\mref{it:rtcd1}).
\end{proof}

The following result shows the close relationship between restricted Rota-Baxter totally compatible dialgebras and tridendriform algebras.

\begin{theorem}
Let $(R,\opa,\opb,P)$ be a {\bf k}-module with two associative multiplications $\opa, \opb$ and a linear map $P:R\to R$ such that
$$P(x)\opa P(y)=P(x\opa P(y)+P(x)\opa y+x\opb y), \quad \forall x, y\in R.$$
Define
\begin{equation}
x\prec_P y:=x\opa P(y),\, x\succ_P y:=P(x)\opa y, \,x\cdot_P y:= x\opb y.
\mlabel{eq:rbtd}
\end{equation}
Then $(R,\prec_P, \succ_P, \cdot_P)$ is a tridendriform algebra if and only if $(R,\opa,\opb,P)$ is a restricted Rota-Baxter totally compatible dialgebra.
\mlabel{thm:rtca}
\end{theorem}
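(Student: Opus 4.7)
The plan is to go through the seven tridendriform axioms in Eq.~(\mref{eq:tri}) one at a time, translate each via the definitions $x\prec_P y=x\opa P(y)$, $x\succ_P y=P(x)\opa y$, $x\cdot_P y=x\opb y$ in Eq.~(\mref{eq:rbtd}), and identify which piece of structure each axiom consumes. Four of the seven axioms will turn out to follow automatically from the hypotheses already built into the theorem statement, namely associativity of $\opa$ and $\opb$ together with the Rota-Baxter identity Eq.~(\mref{eq:rrbe}). The other three axioms will correspond, one to one, with the three compatibility identities in Eq.~(\mref{eq:rda}), which will yield the claimed ``if and only if.''

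Concretely, I would first dispatch the easy cases. The axiom $(x\cdot y)\cdot z=x\cdot(y\cdot z)$ unwinds to $(x\opb y)\opb z=x\opb(y\opb z)$, which is just associativity of $\opb$. The axiom $(x\succ y)\prec z=x\succ(y\prec z)$ becomes $(P(x)\opa y)\opa P(z)=P(x)\opa(y\opa P(z))$, which is associativity of $\opa$. For $(x\prec y)\prec z=x\prec(y\star z)$, associativity of $\opa$ first gives $(x\opa P(y))\opa P(z)=x\opa(P(y)\opa P(z))$, so the remaining identity reduces to $P(y)\opa P(z)=P(y\opa P(z)+P(y)\opa z+y\opb z)$, which is exactly Eq.~(\mref{eq:rrbe}). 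Symmetrically, $(x\star y)\succ z=x\succ(y\succ z)$ reduces through associativity of $\opa$ to the same Rota-Baxter identity.

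The three remaining axioms are where Eq.~(\mref{eq:rda}) enters, and each is equivalent to one of its three identities. Expanding $(x\succ y)\cdot z=x\succ(y\cdot z)$ gives $(P(x)\opa y)\opb z=P(x)\opa(y\opb z)$, the first identity of Eq.~(\mref{eq:rda}). The axiom $(x\prec y)\cdot z=x\cdot(y\succ z)$ gives $(x\opa P(y))\opb z=x\opb(P(y)\opa z)$, the second identity. Finally $(x\cdot y)\prec z=x\cdot(y\prec z)$ gives $(x\opb y)\opa P(z)=x\opb(y\opa P(z))$, the third identity. Since each translation is an honest ``iff,'' all seven tridendriform axioms hold simultaneously precisely when the three equations of Eq.~(\mref{eq:rda}) hold, which by Definition~\mref{def:rb2a} is exactly the condition that $(R,\opa,\opb,P)$ is a restricted Rota-Baxter totally compatible dialgebra. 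I do not anticipate any real obstacle here: the proof is a bookkeeping match between tridendriform relations and compatibility identities, with the only minor care being to apply associativity of $\opa$ before comparing the resulting expression with Eq.~(\mref{eq:rrbe}).
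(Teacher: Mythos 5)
Your proposal is correct and takes essentially the same route as the paper: in both, associativity of $\opa$, $\opb$ together with the assumed Rota--Baxter identity dispose of four of the seven tridendriform axioms, and the remaining three are verbatim restatements of the three identities in Eq.~(\mref{eq:rda}), which gives the equivalence. The only cosmetic difference is that the paper rewrites $P(x\star_P y)$ as $P(x)\opa P(y)$ before applying associativity, whereas you expand $\star_P$ and invoke Eq.~(\mref{eq:rrbe}) at the end; these are the same computation.
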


\begin{proof}
($\Leftarrow$). We just need to verify the seven axioms for the operations $\prec_P,\succ_P$ and $\cdot_P$ in the definition of a tridendriform algebra in Definition~\mref{def:dtalg}. Denote
$\star_P=\prec_P+\succ_P+\cdot_P.$
From the associativity of $\opa$, we obtain
$$(x\star_P y)\succ z=(P(x)\opa P(y))\opa z=P(x)\opa(P(y)\opa z)=x\succ_P(y\succ_P z),$$
$$(x\prec_P y)\prec_P z=(x\opa P(y))\opa P(z)=x\opa(P(y)\opa P(z))=x\prec_P(y\star_P z)$$
and
$$(x\succ_P y)\prec_P z=(P(x)\opa y)\opa P(z)=P(x)\opa(y\opa P(z))=x\succ_P(y\prec_P z).$$
From Eq.~(\mref{eq:rda}), we have
$$(x\cdot_P y)\prec_P z=(x\opb y)\opa P(z)=x\opb(y\opa P(z))=x\cdot_P(y\prec_P z),$$
$$(x\succ_P y)\cdot_P z=(P(x)\opa y)\opb z=P(x)\opa(y\opb z)=x\succ_P(y\cdot_P z)$$
and
$$(x\prec_P y)\cdot_P z=(x\opa P(y))\opb z=x\opb(P(y)\opa z)=x\cdot_P(y\succ_P z).$$
Finally from the associativity of $\opb$, we obtain
$$(x\cdot_P y)\cdot_P z=x\cdot_P(y\cdot_P z).$$
Thus we have verified all the axioms for a tridendriform algebra.
\smallskip

\noindent
($\Rightarrow$).
Let $(R,\opa,\opb,P)$ be as given in the theorem and suppose $(R,\prec_P,\succ_P,\cdot_P)$ defined by Eq.~(\mref{eq:rbtd}) is a tridendriform algebra. Then we also have
$$P(x)\opa P(y)=P(x\opa
P(y)+P(x)\opa y+x\opb y)=P(x\star y).$$
Then the axioms of the dendriform algebra $(R,\prec_P,\succ_P,\cdot_P)$ imply
\begin{enumerate}
\item
$(P(x)\opa y)\opb z=P(x)\opa(y\opb z)$,
\item
$(x\opa P(y))\opb z=x\opb(P(y)\opa z)$,
\item
$(x\opb y)\opa P(z)=x\opb(y\opa P(z))$,
\end{enumerate}
giving Eq.~(\mref{eq:rda}).
\end{proof}

The following corollary follows directly from Lemma~\mref{lem:ctd} and Theorem~\mref{thm:rtca}.
\begin{coro}
\begin{enumerate}
\item
If $(R,\opa,\opb,P)$ is a Rota-Baxter totally compatible dialgebra, then
$(R,\prec_P,\succ_P,\cdot_P)$ defined in Eq.~(\mref{eq:rbtd}) is a tridendriform algebra.
\mlabel{it:cddt1}
\item
If $P$ is surjective, then the converse of Item~(\mref{it:cddt1}) holds.
\mlabel{it:cddt2}
\item
Let $R$ be a $\bfk$-algebra and let $w\in R$ be such that $wP(x)=P(x)w$ for all $x\in R$.
Let $(R,P)$ be a Rota-Baxter algebra of weight $w$. Then
$$ x\prec_P y:=xP(y), x\succ_P y:=P(x)y, x\cdot_P y:= xwy, \quad \forall x, y\in R,$$
define a tridendriform algebra structure on $R$.
\item
{\rm (\mcite{Le})} Let $(R,P)$ be a TD algebra. Then
$$ x\prec_P y:=xP(y), x\succ_P y:=P(x)y, x\cdot_P y:= xP(1)y, \quad \forall x, y\in R,$$
define a tridendriform algebra structure on $R$.
\end{enumerate}
\mlabel{co:ctd}
\end{coro}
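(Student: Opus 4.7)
The plan is to derive all four items as consequences of Lemma~\mref{lem:ctd} and Theorem~\mref{thm:rtca}. Items~(\mref{it:cddt1}) and~(\mref{it:cddt2}) translate between the tridendriform axioms and the structure of a (restricted) Rota-Baxter totally compatible dialgebra via the theorem, while the two concrete cases (c) and~(d) are obtained by producing a restricted Rota-Baxter totally compatible dialgebra via the lemma and then applying the theorem. The three tridendriform operations of Eq.~(\mref{eq:rbtd}) will match the ones in the corollary after substituting the particular $\opa$ and $\opb$ involved.

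For Item~(\mref{it:cddt1}), I would first observe that a Rota-Baxter totally compatible dialgebra is automatically a restricted one: the three identities of Eq.~(\mref{eq:rda}) are immediate specializations of the four-way equality of Eq.~(\mref{eq:2ca}) to triples with one entry of the form $P(u)$. Theorem~\mref{thm:rtca}~($\Leftarrow$) then yields the tridendriform structure. Item~(\mref{it:cddt2}) is the only step with real content: applying Theorem~\mref{thm:rtca}~($\Rightarrow$) produces a restricted Rota-Baxter totally compatible dialgebra, and I need to promote this to a full Rota-Baxter totally compatible dialgebra. The Rota-Baxter relation is already in place from Eq.~(\mref{eq:rrbe}), so the only thing to verify is total compatibility of $(R,\opa,\opb)$. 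Given arbitrary $a,b,c\in R$, surjectivity of $P$ lets me write any one of them as $P(u)$. The first identity of Eq.~(\mref{eq:rda}) applied with $a=P(x)$ gives $(a\opa b)\opb c=a\opa(b\opb c)$; the third with $c=P(z)$ gives $(a\opb b)\opa c=a\opb(b\opa c)$; and the second with $b=P(y)$ gives $(a\opa b)\opb c=a\opb(b\opa c)$. Chaining these three equalities yields the full four-way equality of Eq.~(\mref{eq:2ca}).

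Items~(c) and~(d) are then direct applications. For~(c) I would take $\opa$ to be the given product on $R$ and set $x\opb y:=xwy$; Lemma~\mref{lem:ctd}.(\mref{it:rtcd1}) shows that $(R,\opa,\opb,P)$ is a restricted Rota-Baxter totally compatible dialgebra, and Theorem~\mref{thm:rtca}~($\Leftarrow$) produces the tridendriform algebra whose operations, after unwinding Eq.~(\mref{eq:rbtd}), are $xP(y)$, $P(x)y$ and $xwy$. For~(d), Lemma~\mref{lem:ctd}.(\mref{it:rtcd3}) plays the same role with $x\opb y:=xP(1)y$, giving $xP(y)$, $P(x)y$ and $xP(1)y$. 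The only real obstacle is Item~(\mref{it:cddt2}): everything else is essentially bookkeeping, whereas there I must weave together the three separate compatibility identities of Eq.~(\mref{eq:rda}) using surjectivity of $P$ to recover the full four-way total compatibility axiom.
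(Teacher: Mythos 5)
Your proposal is correct and follows the same route as the paper, which simply asserts that the corollary ``follows directly from Lemma~\mref{lem:ctd} and Theorem~\mref{thm:rtca}.'' The only substantive detail you supply beyond the paper is the verification, for Item~(\mref{it:cddt2}), that surjectivity of $P$ upgrades the three restricted identities of Eq.~(\mref{eq:rda}) to the full four-way equality of Eq.~(\mref{eq:2ca}); this is exactly the content of the paper's (unproved) remark after Definition~\mref{def:rb2a}, and your chaining argument establishes it correctly.
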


We end this section by showing that a Rota-Baxter operator on a totally compatible dialgebra is equivalent to a Rota-Baxter operator on a larger $\bfk$-algebra.

\begin{theorem}
Let $(R,\opa,\opb)$ be a totally compatible dialgebra. Let $(R\ltimes_{L_\opa,R_\opa}R,\star)$ be the $\bf k$-algebra in Corollary~\mref{co:semi} with the product $\star$ defined by Eq.~(\mref{eq:semi}). Then a linear operator $P:R\to R$ is a Rota-Baxter operator on the totally compatible dialgebra $(R,\opa,\opb)$, in the sense of Definition~\mref{def:rb2a},
if and only if
$$\hat {P}: R\ltimes_{L_\opa,R_\opa} R\rightarrow R\ltimes_{L_\opa, R_\opa}R,
\quad \hat {P}(x,y)=(-x +P(y),0),\;\;\forall x, y\in R,$$
is a Rota-Baxter operator of weight 1 on the $\bfk$-algebra $R\ltimes_{L_\opa,R_\opa} R$.
\end{theorem}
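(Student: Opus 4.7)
The plan is a direct expansion: I will write down the weight-$1$ Rota-Baxter identity
$$ \hat P(u)\star\hat P(v) \;=\; \hat P\bigl(u\star\hat P(v)\bigr)+\hat P\bigl(\hat P(u)\star v\bigr)+\hat P(u\star v)$$
for $u=(a,b),\,v=(c,d)\in R\ltimes_{L_\opa,R_\opa}R$, unravel every term using Eq.~(\ref{eq:semi}) (with $\ell=L_\opa$, $r=R_\opa$) together with the definition of $\hat P$, and match coordinates. Since $\hat P$ takes values in $R\oplus 0$, the second coordinate of every term in the displayed identity is automatically $0$, so only the first coordinate carries information.

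\textbf{Computation.} Expanding the first coordinate of $\hat P(u)\star\hat P(v)$ gives
$$ a\opa c - a\opa P(d) - P(b)\opa c + P(b)\opa P(d),$$
while expanding the first coordinates of the three $\hat P$-terms on the right and summing produces
$$ a\opa c - a\opa P(d) - P(b)\opa c + P\bigl(b\opa P(d)\bigr) + P\bigl(P(b)\opa d\bigr) + P(b\opb d),$$
after the ``mixed'' $P$-terms $\pm P(a\opa d)$ and $\pm P(b\opa c)$ cancel in pairs. The three non-$P$ summands on both sides also agree because of the minus sign in $\hat P(x,y)=(-x+P(y),0)$ and the weight $\lambda=1$. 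Equating the two expressions and cancelling the common ``$a,c$-involving'' terms, the identity reduces to
$$ P(b)\opa P(d)\;=\;P\bigl(b\opa P(d)\bigr)+P\bigl(P(b)\opa d\bigr)+P(b\opb d),\quad \forall\, b,d\in R,$$
which is exactly Definition~\ref{def:rb2a} of $P$ being a Rota-Baxter operator on the totally compatible dialgebra $(R,\opa,\opb)$. Since $a$ and $c$ drop out completely, this equivalence works in both directions simultaneously.

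\textbf{Main obstacle.} There is no conceptual subtlety; the obstacle is purely organizational. The three-summand second component of $\star$ from Eq.~(\ref{eq:semi}), combined with the three summands on the right of the Rota-Baxter identity, produces roughly a dozen terms per side that must be correctly compared. The construction $\hat P(x,y)=(-x+P(y),0)$ together with the weight $\lambda=1$ is engineered precisely so that all the ``$a,c$-polluted'' terms cancel off and leave a clean identity living in $R$; verifying this cancellation is what the proof amounts to. The associativity of $\star$, which requires the full totally compatible dialgebra axioms via Corollary~\ref{co:semi}, is used implicitly to legitimize the Rota-Baxter notion on $R\ltimes_{L_\opa,R_\opa}R$ in the first place.
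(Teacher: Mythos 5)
Your proposal is correct and follows essentially the same route as the paper: both proofs expand the four terms of the weight-$1$ Rota--Baxter identity in $R\ltimes_{L_\opa,R_\opa}R$ coordinatewise, observe that the second coordinates vanish, cancel the mixed terms $\pm P(a\opa d)$ and $\pm P(b\opa c)$ along with the non-$P$ terms, and reduce to the identity $P(b)\opa P(d)=P(b\opa P(d))+P(P(b)\opa d)+P(b\opb d)$, which gives the equivalence in both directions at once. No gaps.
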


\begin{proof}
Let $x,y,z,w\in R$. Then
{\allowdisplaybreaks
\begin{eqnarray*}
\hat P(x,y) \star \hat P(z,w)&=&(-x+P(y),0)\star (-z+P(w),0)\\
&=&(x\opa z-x\opa P(w)-P(y)\opa z+P(y)\opa P(w),0);\\
\hat P(\hat P(x,y)\star (z,w))&=&\hat P((-x+P(y),0)\star (z,w))=\hat P(-x\opa z+P(y)\opa z, -x\opa w+P(y)\opa w)\\
&=&(x\opa z-P(y)\opa z-P(x\opa w)+P(P(y)\opa w),0);\\
\hat P ((x,y)\star \hat P(z,w))&=& \hat P((x,y)\star (-z+P(w),0))=\hat P(-x\opa z+x\opa P(w), -y\opa z+y\opa P(w))\\
&=& (x\opa z-x\opa P(w)-P(y\opa z)+P(y\opa P(w)),0);\\
\hat P((x,y)\star (z,w))&=& \hat P(x\opa z,x\opa w+y\opa z+y\opb w)\\
&=&(-x\opa z+P(x\opa w)+P(y\opa z)+P(y\opb w),0).
\end{eqnarray*}
}
Therefore $P(y)\opa P(w)=P(y\opa P(w)+P(y)\opa w+y\opb w)$ if and only if
$$\hat P(x,y) \star \hat P(z,w)=\hat P(\hat P(x,y)\star (z,w))+\hat P ((x,y)\star \hat P(z,w))+\hat P((x,y)\star (z,w)).$$
That is, $P:R
\to R$ is a Rota-Baxter
operator on $(R,\opa,\opb)$ if and only if $\hat {P}$ is a Rota-Baxter operator of weight 1 on $R\ltimes_{L_\opa,R_\opa} R$.
\end{proof}

\section{Free totally compatible dialgebras}
\mlabel{sec:free}

In this section, we construct free totally compatible dialgebras and use them to provide examples of Rota-Baxter totally compatible dialgebras.

Let $X$ be a set. Let
$$S(X)=\left\{x_1\cdots x_n\,\Big|\,x_i\in X, 1\leq i\leq n, n\geq 1\right\}$$
be the free semigroup generated by $X$ and let $M(X)$ be the free monoid generated by $X$. Then
the semigroup algebra $\bfk\langle X\rangle^0:=\bfk\, S(X)$ is the noncommutative nonunitary polynomial algebra generated by $X$ and the semigroup algebra $\bfk\langle X\rangle:=\bfk\, M(X)$ is the noncommutative unitary polynomial algebra generated by $X$. Consider the $\bfk$-module underlying the
tensor product
\begin{equation}
F(X):=\bfk\langle X\rangle^0\ot \bfk\langle X\rangle \cong \bfk (S(X)\times M(X)),
\mlabel{eq:ftcda}
\end{equation}
with $S(X)\times M(X)$ as a canonical basis. As such, $(u,v)\in S(X)\times M(X)$, with $u\in S(X)$ and $v\in M(X)$, corresponds to $u\ot v$ in $F(X)$. Thus we will use $u\ot v$ to denote $(u,v)$.

With these notations, we define two binary operations
$\bar{\opa}$ and $\bar{\opb}$ on $S(X)\times M(X)$ as follows. Consider two elements

\begin{equation}
\fraka=(x_1 \cdots x_m)\otimes (x_{m+1} \cdots x_{m+n}), \quad
\frakb=(x_{m+n+1}\cdots x_{m+n+k})\otimes (x_{m+n+k+1} \cdots x_{m+n+k+l})
\mlabel{eq:freeab}
\end{equation}
in $S(X)\times M(X)$, where $m,k\geq 1$ and $n,\ell\geq 0$ with the convention that $x_{m+1}\cdots x_{m+n}=1$ if $n=0$ and $x_{m+n+k+1}\cdots x_{m+n+k+\ell}=1$ if $\ell=0$. Here $1$ is the identity in $\bfk$.
Define
\begin{eqnarray}
\fraka\bast \frakb&=&(x_1 \cdots x_{m+k})\otimes (x_{m+k+1} \cdots x_{m+n+k+l}),
\mlabel{eq:opa1} \\
\fraka\bodot \frakb&=&(x_1 \cdots x_{m+k-1})\otimes (x_{m+k} \cdots x_{m+n+k+l}).
\mlabel{eq:opb1}
\end{eqnarray}
In particularly, when $n=\ell=0$, we define
\begin{eqnarray}
((x_1 \cdots x_m)\otimes 1)\free{\opa} ((x_{m+1} \cdots
x_{m+k})\otimes 1)&=&(x_1 \cdots x_{m+k})\otimes 1, \mlabel{eq:opa2}
\\
((x_1 \cdots x_m)\otimes 1) \free{\opb}((x_{m+1} \cdots
x_{m+k}))\otimes 1)&=&(x_1 \cdots x_{m+k-1})\otimes x_{m+k}.
\mlabel{eq:opb2}
\end{eqnarray}
These two binary operations expand to binary operations on $F(X)$ by $\bfk$-bilinearity that we still denote by $\free{\opa}$ and $\free{\opb}$.

\begin{theorem} Let $X$ be a set.
\begin{enumerate}
\item
The set $S(X)\times M(X)$, with the multiplications $\free{\opa}$ and $\free{\opb}$ defined in Eqs.~(\mref{eq:opa1}) -- (\mref{eq:opb2}), and the embedding
\begin{equation}
i_X:X\to S(X)\times M(X), \quad x\mapsto x\ot 1, x\in X,
\mlabel{eq:cdemb}
\end{equation}
is the free totally compatible disemigroup on $X$. \mlabel{it:frees}
\item
The $\bfk$-module $F(X)=\bfk\langle X\rangle^0\ot \bfk\langle X\rangle$, with the multiplications $\free{\opa}$ and $\free{\opb}$ defined in Eqs.~(\mref{eq:opa1}) -- (\mref{eq:opb2}), and the embedding
\begin{equation}
i_X:X\to F(X), \quad x\mapsto x\ot 1, x\in X,
\mlabel{eq:cdemba}
\end{equation}
is the free totally compatible dialgebra on $X$. \mlabel{it:freea}
\end{enumerate}
\mlabel{thm:free2al}
\end{theorem}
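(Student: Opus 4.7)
Part~(b) will follow from part~(a) by $\bfk$-linear extension, since $\free{\opa}$ and $\free{\opb}$ on $F(X)$ are the bilinear extensions of their restrictions to the basis $S(X)\times M(X)$ and any TCDA morphism out of $F(X)$ is $\bfk$-linear. To prove~(a), I encode each $\fraka=u\ot v\in S(X)\times M(X)$ by its \emph{word} $w(\fraka):=uv\in S(X)$ and its \emph{left length} $\ell(\fraka):=|u|\ge 1$. Formulas~(\mref{eq:opa1})--(\mref{eq:opb2}) translate uniformly into $w(\fraka\free{\opa}\frakb)=w(\fraka\free{\opb}\frakb)=w(\fraka)w(\frakb)$, together with $\ell(\fraka\free{\opa}\frakb)=\ell(\fraka)+\ell(\frakb)$ and $\ell(\fraka\free{\opb}\frakb)=\ell(\fraka)+\ell(\frakb)-1$. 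Since a basis element is determined by the pair (word, left length), the associativity of each operation and the four equalities of the mixed axiom~(\mref{eq:2ca}) reduce to associativity of string concatenation plus elementary arithmetic on left lengths. Iterating these formulas also yields the left-normed canonical presentation
\[
(x_1\cdots x_m)\ot(x_{m+1}\cdots x_{m+n})=x_1\free{\opa}\cdots\free{\opa}x_m\free{\opb}x_{m+1}\free{\opb}\cdots\free{\opb}x_{m+n},
\]
so $i_X(X)$ generates $S(X)\times M(X)$ under $\free{\opa},\free{\opb}$.

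Let $(T,\opa,\opb)$ be a TCDS and $f:X\to T$ a set map. Define $\bar f:S(X)\times M(X)\to T$ by
\[
\bar f\bigl((x_1\cdots x_m)\ot(x_{m+1}\cdots x_{m+n})\bigr):=E_m\bigl(f(x_1),\dots,f(x_{m+n})\bigr),
\]
where $E_j(a_1,\dots,a_p):=(a_1\opa\cdots\opa a_j)\opb a_{j+1}\opb\cdots\opb a_p$ is unambiguous in $T$ by associativity of $\opa$ and $\opb$; uniqueness of the extension is immediate from the canonical presentation. The crux is the homomorphism property, which I prove by induction on $|\frakb|:=k+\ell$. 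The base case $\frakb=y\ot 1$ reduces to the two identities in $T$,
\[
E_j(a_1,\dots,a_p)\opa a_{p+1}=E_{j+1}(a_1,\dots,a_{p+1}),\qquad E_j(a_1,\dots,a_p)\opb a_{p+1}=E_j(a_1,\dots,a_{p+1}),
\]
established by a secondary induction on $p-j$: the first uses the mixing relation $(X\opb Y)\opa Z=(X\opa Y)\opb Z$ from~(\mref{eq:2ca}) applied at the rightmost $\opb$ in $E_j$, while the second uses associativity of $\opb$.

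The inductive step handles a general $\frakb$ by writing it, via the canonical presentation, as $\frakb=\frakb'\star(y\ot 1)$ with $\star\in\{\free{\opa},\free{\opb}\}$ and $|\frakb'|<|\frakb|$. For each of the four combinations of an outer operation on the $F(X)$ side and the inner operation $\star$, the TCDS axioms in $F(X)$ (associativity of $\free{\opa}$, of $\free{\opb}$, or the mixed axiom~(\mref{eq:2ca})) let me rewrite the argument so as to bring $y\ot 1$ to the outermost position; for example, $\fraka\free{\opa}(\frakb'\free{\opb}(y\ot 1))=(\fraka\free{\opa}\frakb')\free{\opb}(y\ot 1)$. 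Applying the base case then extracts $f(y)$, the induction hypothesis simplifies the remaining two-factor product, and the parallel TCDA axiom in $T$ re-brackets the result into $\bar f(\fraka)$ multiplied by $\bar f(\frakb)$ under the correct operation. The main obstacle lies precisely in these mixed cases where the outer and inner operations differ: here the argument uses the mixed axiom~(\mref{eq:2ca}) in both $F(X)$ and $T$, and it is exactly the symmetry between the TCDA axioms in the source and the target that makes the induction close. Finally, part~(b) is obtained by extending $\bar f$ $\bfk$-linearly from the basis to $F(X)$, the bilinearity of $\free{\opa},\free{\opb}$ ensuring that the extension remains a TCDA morphism.
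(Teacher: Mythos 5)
Your proof is correct and follows the same overall skeleton as the paper's: the same explicit universal map sending $(x_1\cdots x_m)\ot(x_{m+1}\cdots x_{m+n})$ to $(f(x_1)\opa\cdots\opa f(x_m))\opb f(x_{m+1})\opb\cdots\opb f(x_{m+n})$, uniqueness via generation of the basis from $i_X(X)$ (your single left-normed canonical presentation subsumes the paper's three-case decomposition $\calx_1\sqcup\calx_2\sqcup\calx_3$ and its Lemma on $x_1\ot(x_2\cdots x_{n+1})$), and reduction of part (b) to part (a) by linear extension. Two points of your write-up are genuinely different in execution and, in my view, improvements. First, the $(\text{word},\text{left length})$ encoding turns the verification of associativity and of the four-fold identity~(\ref{eq:2ca}) into concatenation plus arithmetic on $\ell$, where the paper computes each side on basis elements directly; this also makes it transparent that the operations are well defined (the left length stays $\geq 1$). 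Second, and more substantively, your double induction proving $E_j(a_1,\dots,a_p)\opa a_{p+1}=E_{j+1}(a_1,\dots,a_{p+1})$ via the relation $(X\opb Y)\opa Z=(X\opa Y)\opb Z$, followed by the induction on $|\frakb|$, supplies the justification for a step the paper dispatches with a single citation of Eq.~(\ref{eq:2ca}): rearranging $\big((a_1\opa\cdots\opa a_m)\opb\cdots\big)\opa\big((b_1\opa\cdots)\opb\cdots\big)$ into normal form in an arbitrary totally compatible disemigroup is an iterated consequence of~(\ref{eq:2ca}), not a one-step application, and your argument closes that gap. The trade-off is length: the paper's proof is shorter but leans on the reader to accept the multi-factor rearrangement, while yours is self-contained.
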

\begin{proof}
(\mref{it:frees}).
We first check that the triple $(S(X)\times M(X),\free{\opa},\free{\opb})$ is a totally compatible disemigroup. For $\fraka,\frakb$ in Eq.~(\mref{eq:freeab}) and
$$\frakc=(x_{m+n+k+\ell+1} \cdots x_{m+n+k+\ell+p})\ot(x_{m+n+k+\ell+p+1} \cdots
x_{m+n+k+\ell+p+q})$$
with $p\geq 1$ and $q\geq 0$. We have
$$ (\fraka\free{\opa} \frakb)\free{\opa} \frakc=
(x_1\ot \cdots x_{m+k+p})\ot (x_{m+k+p+1}\cdots x_{m+k+p+n+\ell+q})=\fraka\free{\opa}(\frakb\free{\opa}\frakc).$$
Hence $\free{\opa}$ is associative. Similarly,
$$ (\fraka\free{\opb} \frakb)\free{\opb} \frakc=
(x_1\ot \cdots x_{m+k+p-2})\ot (x_{m+k+p-1}\cdots x_{m+k+p+n+\ell+q})=\fraka\free{\opb}(\frakb\free{\opb} \frakc),$$
showing that $\free{\opb}$ is associative.

On the other hand, for these choices of $a, b, c$, each term in Eq.~(\mref{eq:2ca}) equals to
$$(x_1 \cdots x_{m+k+p-1})\otimes (x_{m+k+p} \cdots x_{m+n+k+\ell+p+q}),$$
proving Eq.~(\mref{eq:2ca}).

We next show that the totally compatible disemigroup $S(X)\times M(X)$ is free on
$X$ by checking that $(S(X)\times M(X),\bar{\opa},\bar{\opb})$
satisfies the universal property of a free totally compatible disemigroup
over $X$: Let $(S,\opa,\opb)$ be a totally compatible
disemigroup with operations $\opa$ and $\opb$. Let
$f:X\to S$ be a set map. Then there exists a unique
homomorphism
$$\overline {f}:S(X)\times M(X)\longrightarrow S$$ of totally compatible disemigroups such that
$f=\overline{f} \opb i_X$ for the $i_X$ defined in Eq.~(\mref{eq:cdemb}).

Let a set map $f:X\to S$ be given.
Define
\begin{eqnarray}
\free{f}:&& S(X)\ot M(X) \to S, \notag \\
(x_1\cdots x_m)\ot (x_{m+1}\cdots x_{m+n})
&&\mapsto (f(x_1)\opa\cdots \opa f(x_m))\opb (f(x_{m+1})\opb \cdots
\opb f(x_{m+n})),\mlabel{eq:freefdef}
\\
(x_1\cdots x_m)\ot 1 &&\mapsto f(x_1)\opa \cdots \opa f(x_m). \notag
\end{eqnarray}
We will show that $\free{f}$ is the unique extension of $f$ that is a homomorphism of totally compatible disemigroups.

We first show that $\free{f}$ is indeed a homomorphism of totally compatible disemigroups. For any
$$\fraka=(x_1 \cdots x_m)\otimes (x_{m+1} \cdots x_{m+n}), \quad
\frakb=(x_{m+n+1}\cdots x_{m+n+k})\otimes (x_{m+n+k+1} \cdots x_{m+n+k+l})$$
in $S(X)\ot M(X)$ as expressed in Eq.~(\mref{eq:freeab}), we have
{\allowdisplaybreaks
\begin{eqnarray*}
&&\free{f}(\fraka)\opa \free{f}(\frakb)\\
&=&
\big((f(x_1)\opa \cdots\opa f(x_m))\opb (f(x_{m+1})\opb \cdots \opb f(x_{m+n}))\big)
\\
&&\opa
\big((f(x_{m+n+1})\opa\cdots \opa f(x_{m+n+k}))\opb (f(x_{m+n+k+1})\opb \cdots \opb f(x_{m+n+k+l}))\big)
\quad \text{(by Eq.~(\mref{eq:freefdef}))} \\
&=&
(f(x_1)\opa \cdots \opa f(x_{m+k})) \opb (f(x_{m+k+1})\opb \cdots \opb f(x_{m+k+n+\ell}))
\quad \text{(by Eq.~(\mref{eq:2ca}))} \\
&=&
\free{f}((x_1\opa \cdots \opa x_{m+k}) \ot (x_{m+k+1} \cdots x_{m+k+n+\ell}))
\quad \text{(by Eq.~(\mref{eq:freefdef}))} \\
&=&
\free{f}(\fraka \free{\opa} \frakb). \quad \text{(by Eq.~(\mref{eq:opa1}))}
\end{eqnarray*}
}
Similarly, we have
{\allowdisplaybreaks
\begin{eqnarray*}
&&\free{f}(\fraka)\opb \free{f}(\frakb)\\
&=&
\big((f(x_1)\opa \cdots\opa f(x_m))\opb (f(x_{m+1})\opb \cdots \opb f(x_{m+n}))\big)
\\
&&\opb
\big((f(x_{m+n+1})\opa\cdots \opa f(x_{m+n+k}))\opb (f(x_{m+n+k+1})\opb \cdots \opb f(x_{m+n+k+l}))\big)
\quad \text{(by Eq.~(\mref{eq:freefdef}))} \\
&=&
(f(x_1)\opa \cdots \opa f(x_{m+k-1})) \opb (f(x_{m+k})\opb \cdots \opb f(x_{m+k+n+\ell}))
\quad \text{(by Eq.~(\mref{eq:2ca}))} \\
&=&
\free{f}((x_1\opa \cdots \opa x_{m+k-1}) \ot (x_{m+k} \cdots x_{m+k+n+\ell}))
\quad \text{(by Eq.~(\mref{eq:freefdef}))} \\
&=&
\free{f}(\fraka \free{\opa} \frakb). \quad \text{(by Eq.~(\mref{eq:opb1}))}
\end{eqnarray*}
}

Thus it remains to show that any homomorphism of totally compatible disemigroups from $S(X)\ot M(X)$ that extends $f$ is uniquely determined by Eq.~(\mref{eq:freefdef}). Suppose
$$\free{f}':S(X)\ot M(X) \to S$$
is a homomorphism of totally compatible disemigroups that extends $f$. We will prove $\free{f}'=\free{f}.$

Note that the set
$$ \calx:=S(X)\times M(X):=\{(x_1\cdots x_m)\ot (x_{m+1}\cdots x_{m+n})\,|\, x_i\in X, 1\leq i\leq m+n, m\geq 1, n\geq 0\} $$
is a disjoint union
\begin{equation}
\calx = \calx_1\sqcup \calx_2 \sqcup \calx_3,
\mlabel{eq:xcup}
\end{equation}
where
\begin{eqnarray}
\calx_1:&=&\{(x_1\cdots x_m)\ot (x_{m+1}\cdots x_{m+n})\in \calx\,|\, m\geq 1, n= 0\}\\
&=&\{(x_1\cdots x_m)\ot 1\,|\, x_i\in X, 1\leq i\leq m, m\geq 1\},
\notag\\
\calx_2:&=&\{(x_1\cdots x_m)\ot (x_{m+1}\cdots x_{m+n})\in \calx\,|\, m= 1, n\geq 1\}\\
&=&\{x_1\ot (x_2\cdots x_{n+1})\,|\, x_i\in X, 1\leq i\leq n+1, n\geq 1\},
\notag\\
\calx_3:&=&\{(x_1\cdots x_m)\ot (x_{m+1}\cdots x_{m+n})\in \calx\,|\, m\geq 2, n\geq 1\}.
\end{eqnarray}
Thus in order to prove $\free{f}'=\free{f}$, we just need to show that $\free{f}'$ agrees with $\free{f}$ on the three subsets $\calx_i, 1\leq i\leq 3,$ of $\calx$.

\smallskip

\noindent
{\bf Case 1. $\free{f}'=\free{f}$ on $\calx_1$.}
Note that, by Eq.~(\mref{eq:opa2}), $\calx_1=S(X)\ot 1$ is the free semigroup on $X$ with respect to the product $\free{\opa}$. Since $\free{f}'$ is in particular a semigroup homomorphism with respect to $\free{\opa}$, $\free{f}'$ must agree with the unique extension of $f$ to a semigroup homomorphism
$$(S(X)\ot 1,\free{\opa})\to (R,\opa).$$
Thus we must have
\begin{eqnarray}
&&\free{f}'((x_1\cdots x_m)\ot 1)
\notag\\
&=& \free{f}((x_1\ot 1)\free{\opa}\cdots \free{\opa} (x_m\ot 1))\mlabel{eq:freef1}\\
&=& \free{f}(x_1\ot 1) \opa \cdots \opa\free{f}(x_m\ot 1)\notag\\
&=& f(x_1)\opa \cdots \opa f(x_m). \notag
\end{eqnarray}
This agrees with the definition of $\free{f}$ in Eq.~(\mref{eq:freefdef}).
\smallskip

\noindent
{\bf Case 2. $\free{f}'$ agrees with $\free{f}$ on $\calx_2$.}
We first prove a lemma.
\begin{lemma}
For any $n\geq 1$, we have
\begin{equation}
x_1\ot (x_2\cdots x_{n+1})=(x_1\ot 1)\free{\opb} \cdots \free{\opb}(x_{n+1}\ot 1).
\mlabel{eq:1g1}
\end{equation}
\mlabel{lem:1g1}
\end{lemma}
\begin{proof}
We prove Eq.~(\mref{eq:1g1}) by induction on $n\geq 1$. When $n=1$, this follows from Eq.~(\mref{eq:opb2}). Assume that Eq.~(\mref{eq:1g1}) has been proved for $n=k\geq 1$ and consider the case when $n=k+1$. So we have $x_1\ot (x_2\cdots x_{k+2})$. By Eq.~(\mref{eq:opb2}) and the induction hypothesis, we have
\begin{eqnarray*}
x_1\ot (x_2\cdots x_{k+1})&=& (x_1\ot 1) \free{\opb} (x_2\ot (x_3\cdots x_{k+2}))\\
&=& (x_1\ot 1)\free{\opb} ((x_2\ot 1)\free{\opb} \cdots \free{\opb}(x_{k+1}\ot 1)),
\end{eqnarray*}
as needed.
\end{proof}

Thus, in order to get a totally compatible disemigroup homomorphism, the restriction of $\free{f}'$ on $\calx_2$ must satisfy
\begin{eqnarray}
\free{f}'(x_1\ot (x_2\cdots x_{n+1}))&=&
\free{f}'((x_1\ot 1)\free{\opb} \cdots \free{\opb}(x_{n+1}\ot 1))\notag \\
&=& \free{f}'(x_1\ot 1) \opb \cdots \opb \free{f}'(x_{n+1}\ot 1) \mlabel{eq:freef2}\\
&=& f(x_1)\opb \cdots \opb f(x_{n+1}).
\quad \text{(by Eq.~(\mref{eq:freef1}))} \notag
\end{eqnarray}
This again agrees with $\free{f}$ in Eq.~(\mref{eq:freefdef}).
\smallskip

\noindent
{\bf Case 3. $\free{f}'$ agrees with $\free{f}$ on $\calx_3$.}
By Eq.~(\mref{eq:opa1}), we have
$$(x_1\cdots x_m)\ot (x_{m+1}\cdots x_{m+n}) =
((x_1\cdots x_{m-1})\ot 1)\free{\opa} (x_m\ot (x_{m+1}\cdots x_{m+n})).$$
Thus the $\free{f}'$ in Eq.~(\mref{eq:freef2}) can be uniquely defined on $$\free{f}': \calx_3 \to S$$
by
\begin{eqnarray}
&&\free{f}'((x_1\cdots x_m)\ot (x_{m+1}\cdots x_{m+n})) \notag \\
&=&
\free{f}'(((x_1\cdots x_{m-1})\ot 1)\free{\opa} (x_m\ot (x_{m+1}\cdots x_{m+n}))) \notag
\\&=&
\free{f}'(((x_1\ot 1)\free{\opa}\cdots \free{\opa}(x_{m-1}\ot 1))\free{\opa} ((x_m\ot 1)\free{\opb}\cdots \free{\opb}(x_{m+n}\ot 1)))\mlabel{eq:freef3}\\
&=&(\free{f}'(x_1\ot 1)\opa\cdots \opa\free{f}'(x_{m-1}\ot 1))\opa (\free{f}'(x_m\ot 1)\opb\cdots \opb \free{f}'(x_{m+n}\ot 1))\notag\\
&=&
(f(x_1)\opa\cdots \opa f(x_{m-1}))\opa(f(x_m)\opb \cdots \opb f(x_{m+n})).\notag
\end{eqnarray}
This again agrees with $\free{f}$. Therefore $\free{f}'=\free{f}$ and the uniqueness of $\free{f}$ is proved.
\smallskip

\noindent
(\mref{it:freea}). Let $\mathit{TCDA}$, $\mathit{TCDS}$ and $\mathit{Sets}$ be the category of totally compatible dialgebras, the category of totally compatible semigroups and the category of sets, respectively. Then the forgetful functor from $\mathit{TCDA}$ to $\mathit{Sets}$ is the composition of the forgetful functor from $\mathit{TCDA}$ to $\mathit{TCDS}$ followed by the forgetful functor from $\mathit{TCDS}$ to $\mathit{Sets}$. Thus the corresponding adjoint (free) functor from $\mathit{Sets}$ to $\mathit{TCDA}$ is the composition of the free functor from $\mathit{Sets}$ to $\mathit{TCDS}$ followed by the free functor from $\mathit{TCDS}$ to $\mathit{TCDA}$. The former is given by Item~(\mref{it:frees}) and the latter is given by taking the disemigroup algebra:
$$ \bfk (S(X)\times M(X)) \cong (\bfk S(X)) \ot (\bfk M(X)) = \bfk\langle X\rangle^0 \ot \bfk\langle X\rangle,$$
with the operations $\free{\opa}$ and $\free{\opb}$ extended by bilinearity.
\end{proof}

As an application of Theorem~\mref{thm:free2al}, we show that free totally compatible algebras provide examples of Rota-Baxter totaly compatible dialgebras as well as totally compatible dialgebras.
\begin{prop}
Let $(F(X),\free{\opa},\free{\opb})$ be the free totally compatible dialgebra on $X$ in Theorem~\mref{thm:free2al}. Then $(F(X),\free{\opa},-\,\free{\opb})$ has a
Rota-Baxter operator P given by
 $$P((x_1\cdots x_m)\ot(x_{m+1} \cdots x_{m+n}))=(x_1\cdots x_{m+n})\ot 1,$$
$$P((x_1 \cdots x_m)\ot 1)=(x_1 \cdots x_m)\ot 1,\,
\quad \forall x_i\in X, 1\leq i\leq m+n.$$
\mlabel{pp:fcdf}
\end{prop}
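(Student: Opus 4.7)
The plan is to verify the Rota-Baxter identity directly on the canonical basis $S(X)\times M(X)$ of $F(X)$. First, observe that $(F(X),\free{\opa},-\,\free{\opb})$ is itself a totally compatible dialgebra, by the scalar rescaling remark following Definition~\mref{def:cda}, and that negating $\free{\opb}$ plays the role of weight $-1$ as discussed in the remark following Definition~\mref{def:rb2a}. Hence the Rota-Baxter identity to be checked on $P$ is
\begin{equation*}
P(\fraka)\,\free{\opa}\,P(\frakb) \;=\; P(\fraka\,\free{\opa}\,P(\frakb)) + P(P(\fraka)\,\free{\opa}\,\frakb) - P(\fraka\,\free{\opb}\,\frakb),
\end{equation*}
for $\fraka,\frakb\in S(X)\times M(X)$. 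By bilinearity of $\free{\opa},\free{\opb}$ and linearity of $P$, it suffices to check this on basis elements.

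The central observation is that $P$ is a ``collapsing'' operator, $P(u\otimes v)=uv\otimes 1$, where $uv$ denotes the concatenation of words. In particular, the image of $P$ is contained in $\bfk\langle X\rangle^0\otimes 1$, and $P$ acts as the identity on this subspace. This also confirms that the two defining formulas for $P$ in the statement are consistent, the second being the special case $n=0$ of the first.

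Next, I would fix $\fraka$ and $\frakb$ as in Eq.~(\mref{eq:freeab}) and inspect the definitions~(\mref{eq:opa1}),~(\mref{eq:opb1}),~(\mref{eq:opa2}),~(\mref{eq:opb2}). The key structural remark is that the underlying concatenated word of $\fraka\,\star\,\frakb$ is always $x_1\cdots x_{m+n+k+\ell}$ for $\star\in\{\free{\opa},\free{\opb}\}$; only the position of the tensor separator moves. Since $P$ depends only on this underlying word, applying $P$ to each of the three right-hand-side arguments
$$\fraka\,\free{\opa}\,P(\frakb),\qquad P(\fraka)\,\free{\opa}\,\frakb,\qquad \fraka\,\free{\opb}\,\frakb$$
yields $(x_1\cdots x_{m+n+k+\ell})\otimes 1$; and the left-hand side $P(\fraka)\,\free{\opa}\,P(\frakb)$ equals this same element directly by Eq.~(\mref{eq:opa2}), since both $P(\fraka)$ and $P(\frakb)$ lie in $\bfk\langle X\rangle^0\otimes 1$. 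The Rota-Baxter identity thus reduces to the arithmetic $1=1+1-1$.

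The only case checking concerns the boundary values $n=0$ or $\ell=0$, where~(\mref{eq:opa2}) and~(\mref{eq:opb2}) replace~(\mref{eq:opa1}) and~(\mref{eq:opb1}); but the underlying-word description is uniform across these cases, so no genuine obstacle arises. The ``hard part,'' if any, is simply to recognize that once $P$ collapses its argument into $\bfk\langle X\rangle^0\otimes 1$, the three correction terms on the right-hand side become identical elements, and the telescoping $+1+1-1=+1$ matches the left-hand side.
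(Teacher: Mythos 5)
Your proof is correct and follows essentially the same route as the paper: the paper likewise fixes basis elements $\fraka,\frakb$ and checks that $P(\fraka)\,\free{\opa}\,P(\frakb)$, $P(\fraka\,\free{\opa}\,P(\frakb))$, $P(P(\fraka)\,\free{\opa}\,\frakb)$ and $P(\fraka\,\free{\opb}\,\frakb)$ all equal $(x_1\cdots x_{m+n+k+\ell})\ot 1$, so the identity reduces to $1=1+1-1$. Your ``underlying word is preserved, only the separator moves'' observation is just a cleaner packaging of the paper's direct check.
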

\vspace{-.3cm}
\begin{proof}
Let $x=(x_1 \cdots x_m)\ot(x_{m+1} \cdots x_{m+n})$ and
$y=(x_{m+n+1} \cdots x_{m+n+k})\ot(x_{m+n+k+1} \cdots x_{m+n+k+\ell})$.
Then from the definitions of $\free{\opa}, \free{\opb}$ and $P$, we directly check that each of $P(x)\free{\opa}P(y)$,
$P(x\bar{\opa} P(y))$, $P(P(x)\bar{\opa} y)$ and $P(x\bar{\opb} y)$ equals to $(x_1 \cdots x_{m+n+k+\ell})\ot 1$.
Hence,
$$P(x)\bar{\opa} P(y)=P(x\bar{\opa} P(y))+P(P(x)\bar{\opa}
y)+P(x(-\bar{\opb}) y).$$
Therefore, $(F(X),\bar{\opa},\bar{\opb},P)$ is Rota-Baxter totally compatible dialgebra.
\end{proof}

\section{Totally compatible Lie dialgebras, Rota-Baxter operators and PostLie algebras}
\mlabel{sec:rbcld}

In this section we study variations of compatible Lie dialgebra and Rota-Baxter operators on them. We generalize the relationship between associative algebras and Lie algebras to the relationship between totally compatible dialgebras and totally compatible Lie dialgebras. We also generalize the relationship between Rota-Baxter Lie algebras and PostLie algebras to the relationship between Rota-Baxter totally compatible Lie dialgebras and PostLie algebras.

\subsection{Compatible Lie dialgebras}
The origin of compatible Lie dialgebras is the following definition.
\begin{defn}\mcite{GS1,GS2,Ku,OS3}
{\rm Let  $(V,[\ ,\ ]_1)$ and $(V,[\ ,\ ]_2)$ be two Lie algebras. They are called {\bf compatible} if
for any $\alpha,\beta\in \bf k$, the following product
\begin{equation}
[x, y]=\alpha [x, y]_1+\beta [x, y]_2,\forall x,y\in V,
\end{equation}
defines a Lie algebra.}
\end{defn}

\begin{prop}\mcite{Ku,GS1,GS2,OS3}. Let $(V,\opa)$ and $(V,\opb)$ be two Lie algebras. Then the following conditions are equivalent:
\begin{enumerate}
\item  $(V,[\ ,\ ]_1)$ and $(V,[\ ,\ ]_2)$ are compatible.
\item The following equation holds:
\begin{equation}
[[x,y]_1,z]_2+[[z,x]_1,y]_2+[[y,z]_1,x]_2+[[x,y]_2,z]_1+[[z,x]_2,y]_1+[[y,z]_2,x]_1=0
,\;\;\forall x,y,z\in V. \mlabel{eq:Lie-com}\end{equation}
\end{enumerate}
\end{prop}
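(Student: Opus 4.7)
The plan is to prove the equivalence via a direct expansion of the Jacobiator of a generic linear combination of the two brackets, grouping the resulting terms by their bidegree in $\alpha$ and $\beta$. First I would observe that for any $\alpha, \beta \in \bfk$ the combined operation $[x,y] := \alpha[x,y]_1 + \beta[x,y]_2$ is automatically $\bfk$-bilinear and skew-symmetric, since both constituent brackets are. Consequently, verifying that $(V,[\,,\,])$ is a Lie algebra reduces entirely to checking the Jacobi identity
\[
J_{\alpha,\beta}(x,y,z) := [[x,y],z] + [[z,x],y] + [[y,z],x] = 0.
\]

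Next I would expand $J_{\alpha,\beta}$ using bilinearity. A single iterated bracket such as $[[x,y],z]$ unfolds into four terms, namely $\alpha^2[[x,y]_1,z]_1$, $\alpha\beta[[x,y]_1,z]_2$, $\alpha\beta[[x,y]_2,z]_1$, and $\beta^2[[x,y]_2,z]_2$. Summing over the three cyclic permutations of $(x,y,z)$ and collecting by the monomial in $\alpha, \beta$ gives the clean decomposition
\[
J_{\alpha,\beta}(x,y,z) = \alpha^2 J_1(x,y,z) + \alpha\beta\, M(x,y,z) + \beta^2 J_2(x,y,z),
\]
where $J_i(x,y,z)$ is the Jacobiator of the $i$-th bracket (hence identically zero by hypothesis) and $M(x,y,z)$ is precisely the six-term expression on the left-hand side of equation~(\ref{eq:Lie-com}).

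For the implication (a)$\Rightarrow$(b) I would specialize to $\alpha = \beta = 1$: compatibility forces $J_{1,1} \equiv 0$, and after cancelling the vanishing $J_1$ and $J_2$ contributions one is left with $M \equiv 0$, which is exactly (\ref{eq:Lie-com}). Conversely, for (b)$\Rightarrow$(a), the same decomposition immediately yields $J_{\alpha,\beta} \equiv 0$ for every pair $(\alpha,\beta)$, so every linear combination is a Lie bracket. The argument is essentially a single bookkeeping exercise, and no genuine obstacle is anticipated; the only point that requires care is tracking the cyclic permutations so that the six cross-terms collected in the $\alpha\beta$-coefficient appear in the same order as the six summands of (\ref{eq:Lie-com}).
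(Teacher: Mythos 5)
Your argument is correct and is the standard proof of this equivalence; note that the paper itself gives no proof here, simply citing \mcite{Ku,GS1,GS2,OS3}. Your bidegree decomposition $J_{\alpha,\beta}=\alpha^2 J_1+\alpha\beta\,M+\beta^2 J_2$ is exactly the right bookkeeping, and specializing to $\alpha=\beta=1$ for the forward implication works over any commutative unitary ring $\bfk$, so there is nothing to fix.
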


Following~\mcite{St}, we give the following definitions.
\begin{defn}
{\rm
Consider a triple $(V,[\ ,\ ]_1,[\ ,\ ]_2)$ where $[\ ,\ ]_1$ and $[\ ,\ ]_2$ are Lie brackets.
\begin{enumerate}
\item
The triple is called a {\bf compatible Lie dialgebra} if Eq.~(\mref{eq:Lie-com}) holds.
\item
The triple is called a {\bf totally compatible Lie dialgebra} if
\begin{eqnarray}
&[ [x,y]_1,z]_2= [[x,y]_2,z]_1, & \mlabel{eq:tcl1}\\
&[[x,y]_1,z]_2+[[z,x]_1,y]_2+[[y,z]_1,x]_2= 0, \quad \forall x, y, z\in V. &
\mlabel{eq:tcl2}
\end{eqnarray}
\end{enumerate}
}
\end{defn}
A compatible Lie dialgebra is a $Lie^2$-algebra for the operad $Lie^2$ in~\cite[Proposition~1.15]{St} and a totally compatible Lie dialgebra is a ${}^2Lie$-algebra for the operad ${}^2Lie$ in~\cite[Definition~1.13]{St}.

\begin{remark}
{\rm If $(V,[\ ,\ ]_1,[\ ,\ ]_2)$ is a totally compatible Lie dialgebra, then for any choice of $\{i_1,i_2\}=\{j_1,j_2\}=\{k_1,k_2\}=\{1,2\}$, we have
\begin{equation}
[[x,y]_{i_1},z]_{i_2}+[[z,x]_{j_1},y]_{j_2} +[[y,z]_{k_1},x]_{k_2}=0.
\mlabel{eq:lieg}
\end{equation}
}\end{remark}

\begin{prop}
Let $(A,\opa,\opb)$ be a totally compatible dialgebra. Define
\begin{equation}
[x,y]_1:=x\opa y-y\opa x,\,[x,y]_2:=x\opb y-y\opb x,\forall x,y\in A.\mlabel{eq:cond}
\end{equation}
Then $(A,[\ ,\ ]_1,[\ ,\ ]_2)$ is a totally compatible Lie dialgebra.
\mlabel{pp:alcda}
\end{prop}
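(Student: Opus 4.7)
The plan is to verify directly the three requirements for a totally compatible Lie dialgebra: (i) each of $[\ ,\ ]_1$ and $[\ ,\ ]_2$ is a Lie bracket; (ii) the mixed identity Eq.~(\mref{eq:tcl1}); and (iii) the mixed Jacobi-type identity Eq.~(\mref{eq:tcl2}). Step (i) is immediate: since $\opa$ and $\opb$ are associative by Definition~\mref{def:cda}(\mref{it:cda1}), the standard commutator construction yields antisymmetric brackets satisfying the Jacobi identity.

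For (ii) and (iii) the key observation is that the totally compatible axiom Eq.~(\mref{eq:2ca}) collapses four a priori distinct trilinear expressions into a single one. I would introduce the shorthand
\[
\lae a,b,c\rae := (a\opa b)\opb c = a\opa(b\opb c) = (a\opb b)\opa c = a\opb(b\opa c),
\]
and then expand
\[
[[x,y]_1,z]_2 = (x\opa y)\opb z - (y\opa x)\opb z - z\opb(x\opa y) + z\opb(y\opa x).
\]
The first clause of $\lae\cdot,\cdot,\cdot\rae$ rewrites the two ``outer'' terms and the fourth clause rewrites the two ``inner'' terms, giving $[[x,y]_1,z]_2 = \lae x,y,z\rae - \lae y,x,z\rae - \lae z,x,y\rae + \lae z,y,x\rae$. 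A parallel expansion of $[[x,y]_2,z]_1$, using the third and second clauses of $\lae\cdot,\cdot,\cdot\rae$, yields the same four terms, establishing Eq.~(\mref{eq:tcl1}).

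For (iii) I would expand the three summands $[[x,y]_1,z]_2$, $[[z,x]_1,y]_2$, $[[y,z]_1,x]_2$ in the same shorthand. Each contributes four of the six permutations of $\{x,y,z\}$ with appropriate signs. A direct inspection shows that each of the six permutations $\lae x,y,z\rae$, $\lae y,x,z\rae$, $\lae z,x,y\rae$, $\lae z,y,x\rae$, $\lae x,z,y\rae$, $\lae y,z,x\rae$ appears in the total sum exactly once with coefficient $+1$ and once with coefficient $-1$, so the sum vanishes, establishing Eq.~(\mref{eq:tcl2}).

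The main obstacle is purely bookkeeping: one must match each of the twelve commutator terms to the correct clause of Eq.~(\mref{eq:2ca}) before the $\lae\cdot,\cdot,\cdot\rae$ shorthand becomes useful. Once the canonical form is adopted, both Eq.~(\mref{eq:tcl1}) and Eq.~(\mref{eq:tcl2}) reduce to elementary term-by-term comparison with no further algebraic input required.
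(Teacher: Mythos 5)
Your proposal is correct and follows essentially the same route as the paper: both proofs expand the iterated commutators and use Eq.~(\mref{eq:2ca}) to identify the resulting trilinear terms pairwise, so that Eq.~(\mref{eq:tcl1}) follows by direct comparison and Eq.~(\mref{eq:tcl2}) by cancellation over the six permutations. Your shorthand $\lae a,b,c\rae$ for the common value of the four products is merely a cleaner bookkeeping device for the identical computation, and you in fact carry out the verification of Eq.~(\mref{eq:tcl2}) explicitly where the paper dismisses it as ``a similar calculation.''
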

\begin{proof}
Since $\opa$ and $\opb$ are associative, the brackets $[\ ,\ ]_1$ and $[\ ,\ ]_2$ are Lie brackets. Note that for any $x,y,z\in A$, we have
\begin{eqnarray*} &&[[x,y]_1,z]_2=(x\opa y)\opb z-(y\opa x)\opb z-z\opb(x\opa y)
+z\opb(y\opa x),\\
&&[[x,y]_2,z]_1=(x\opb y)\opa z-(y\opb x)\opa z-z\opa(x\opb y)
+z\opa(y\opb x).\end{eqnarray*}
Hence Eq.~(\mref{eq:tcl1}) holds.
Eq.~(\mref{eq:tcl2}) is verified by a similar calculation.
\end{proof}

\subsection{Rota-Baxter totally compatible Lie dialgebras and PostLie algebras}

We recall the concept of a PostLie algebra~\mcite{Va}.
\begin{defn}
{\rm
A {\bf PostLie algebra} is a
$\bfk$-module $L$ with two bilinear operations $\circ$
and $[\ ,\ ]$ that satisfy the relations:
\begin{equation}
[x,y]=-[y,x],\mlabel{eq:polie1}
\end{equation}
\begin{equation}
[[x,y],z]+[[z,x],y]+[[y,z],x]=0,\mlabel{eq:polie2}
\end{equation}
\begin{equation}
x\circ(y\circ z)-y\circ(x\circ z)+(y\circ x)\circ z-(x\circ y)\circ
z+[y,x]\circ z=0,\mlabel{eq:polie3}
\end{equation}
\begin{equation}
z\circ[x,y]-[z\circ x,y]-[x,z\circ y]=0, \forall x,y\in L.
\mlabel{eq:polie4}
\end{equation}
}\end{defn}

\begin{theorem}
Consider a triple $(V,[\ ,\ ]_1,[\ ,\ ]_2,P)$, where
\begin{enumerate}
\item $[\ ,\ ]_1$ and $[\ ,\ ]_2$ are Lie brackets,
\item
\begin{equation}
[[x,y]_1,z]_2+[[z,x]_2,y]_1+[[y,z]_1,x]_2=0, \forall x,y,z\in V.
\mlabel{eq:lcom3}
\end{equation}
\item
\begin{equation}
[P(x),P(y)]_1=P[P(x),y]_1+P[x,P(y)]_1+P[x,y]_2, \forall x, y\in V.
\mlabel{eq:rbcl}
\end{equation}
\end{enumerate}
Then the operations
 \begin{equation}
 x\circ y:=[P(x),y]_1, \quad [x,y]:=[x,y]_2, \quad \forall x,y\in V,
 \mlabel{eq:clpl}
\end{equation}
define a PostLie algebra $(V,\circ,[\ ,\ ])$.
In particular, if $(V,[\ ,\ ]_1, [\ ,\ ]_2)$ is a totally compatible Lie dialgebra, then the operations in Eq.~(\mref{eq:clpl}) define a PostLie algebra.
\mlabel{thm:cldpl}
\end{theorem}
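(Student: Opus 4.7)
The plan is to verify the four PostLie axioms (\ref{eq:polie1})--(\ref{eq:polie4}) for $(V,\circ,[\ ,\ ])$ defined by $x\circ y:=[P(x),y]_1$ and $[x,y]:=[x,y]_2$. Axioms (\ref{eq:polie1}) and (\ref{eq:polie2}) are immediate since $[\ ,\ ]_2$ is assumed to be a Lie bracket, so only (\ref{eq:polie3}) and (\ref{eq:polie4}) require work. The ``in particular'' statement will follow at the end, once I observe that (\ref{eq:tcl1}) allows one to rewrite (\ref{eq:tcl2}) in the form (\ref{eq:lcom3}): substituting $[[z,x]_1,y]_2=[[z,x]_2,y]_1$ into (\ref{eq:tcl2}) produces exactly (\ref{eq:lcom3}).

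For axiom (\ref{eq:polie4}), namely $[P(z),[x,y]_2]_1=[[P(z),x]_1,y]_2+[x,[P(z),y]_1]_2$, I would apply the mixed Jacobi identity (\ref{eq:lcom3}) with the substitution $(x,y,z)\mapsto(y,P(z),x)$, which yields
\[
[[y,P(z)]_1,x]_2+[[x,y]_2,P(z)]_1+[[P(z),x]_1,y]_2=0.
\]
Using the antisymmetry of $[\ ,\ ]_1$ and $[\ ,\ ]_2$ to rearrange the first and second summands gives precisely (\ref{eq:polie4}). This is essentially a matter of choosing the right permutation of $(x,y,z)$ in (\ref{eq:lcom3}); note that the Rota-Baxter relation (\ref{eq:rbcl}) is not needed here.

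Axiom (\ref{eq:polie3}) is the main obstacle and the place where the Rota-Baxter relation must enter. Substituting the definitions, (\ref{eq:polie3}) becomes
\[
[P(x),[P(y),z]_1]_1-[P(y),[P(x),z]_1]_1+\bigl[P[P(y),x]_1-P[P(x),y]_1+P[y,x]_2,\,z\bigr]_1=0.
\]
I would first apply the Jacobi identity for $[\ ,\ ]_1$ to collapse the first two terms into $[[P(x),P(y)]_1,z]_1$, so that the entire left-hand side becomes $[\Phi,z]_1$ where
\[
\Phi:=[P(x),P(y)]_1+P[P(y),x]_1-P[P(x),y]_1+P[y,x]_2.
\]
It then remains to show $\Phi=0$. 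Expanding $[P(x),P(y)]_1$ via (\ref{eq:rbcl}) and simplifying using the antisymmetry of both brackets, the four $P$-terms cancel pairwise: the $P[P(x),y]_1$ terms cancel, while the $P[x,P(y)]_1+P[P(y),x]_1$ and $P[x,y]_2+P[y,x]_2$ pairs each vanish by antisymmetry. This completes the verification of (\ref{eq:polie3}), and hence of the theorem. The second assertion is then immediate from the first, as noted at the start.
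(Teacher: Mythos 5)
Your proposal is correct and follows essentially the same route as the paper: axioms (\ref{eq:polie1})--(\ref{eq:polie2}) are immediate, (\ref{eq:polie4}) is exactly the substitution $(x,y,z)\mapsto(y,P(z),x)$ in (\ref{eq:lcom3}) plus antisymmetry, and (\ref{eq:polie3}) reduces via the Jacobi identity for $[\ ,\ ]_1$ and the Rota--Baxter relation (\ref{eq:rbcl}) to a cancellation of the $P$-terms. The only (immaterial) difference is that you apply Jacobi first and then (\ref{eq:rbcl}) to kill the bracketed sum $\Phi$, whereas the paper applies (\ref{eq:rbcl}) first and invokes Jacobi at the end; your explicit derivation of (\ref{eq:lcom3}) from (\ref{eq:tcl1})--(\ref{eq:tcl2}) for the ``in particular'' clause is also the intended one.
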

\begin{proof}
We only need to verify Eqs.~(\mref{eq:polie3}) and (\mref{eq:polie4}).

By Eq.~(\mref{eq:clpl}), the left hand side of Eq.~(\mref{eq:polie3}) is
{\allowdisplaybreaks
\begin{eqnarray*}
&&[P(x),[P(y),z]_1]_1-[P[P(x),y]_1,z]_1-[P(y),[P(x),z]_1]_1 +[P[P(y),x]_1,z]_1+[P[y,x]_2,z]_1 \\
&=&[P(x),[P(y),z]_1]_1 +[\left(P[y,P(x)]_1+P[P(y),x]_1\right),z]_1 -[P(y),[P(x),z]_1]_1+[P[y,x]_2,z]_1 \\
&=&[P(x),[P(y),z]_1]_1 +[\left([P(y),P(x)]_1-P([y,x]_2)\right),z]_1 \\
&&-[P(y),[P(x),z]_1]_1+[P[y,x]_2,z]_1 \quad \text{(by Eq.~(\mref{eq:rbcl}))}\\
&=&[P(x),[P(y),z]_1]_1 +[[P(y),P(x)]_1,z]_1 -[P(y),[P(x),z]_1]_1 \\
&=&[[z,P(y)]_1,P(x)]_1+[[P(y),P(x)]_1,z]_1 +[[P(x),z]_1,P(y)]_1
\end{eqnarray*}
}
which is zero by the Jacobi identity of the Lie bracket $[\ ,\ ]_1$.

On the other hand, the left hand side of Eq.~(\mref{eq:polie4}) is
$$[P(z),[x,y]_2]_1-[[P(z),x]_1,y]_2-[x,[P(z),y]_1]_2\\
= -[[x,y]_2,P(z)]_1-[[P(z),x]_1,y]_2-[[y,P(z)]_1,x]_2
$$
which is zero by Eq.~(\mref{eq:lcom3}) where $x, y, z$ are replaced by $y, P(z), x$ respectively.
\end{proof}

The following commutative diagram of categories has been established in
~\mcite{BGN2,BGN4}.
\begin{equation}
\xymatrix{
{\begin{array}{c}\text{Rota-Baxter algebras}\\ \text{of weight 1\ } (R,\opa,P) \end{array}} \ar[rrr]^{[x,y]:=x\opa y-y\opa x}
\ar[dd]_{\tiny{\begin{array}{l}x\prec y:=x\opa P(y)\\
x\succ y:=P(x)\opa y\\ x \cdot y:=x\opa y\end{array}}} &&&
{\begin{array}{c}\text{Rota-Baxter Lie algebras} \\ \text{of weight 1\ } (R,[\ ,\ ],P) \end{array}}
\ar[dd]_{x\circ y:=[P(x),y]}^{[x,y]:=[x,y]} \\
&&&\\
{\begin{array}{c}\text{Tridendriform algebras\ } \\ (R,\prec,\succ,\cdot)\end{array}}
\ar[rrr]^{x\circ y:=x\succ y-y\prec x}_{[x,y]:=x\cdot y-y\cdot x} &&& {\begin{array}{c}\text{PostLie algebras\ } \\
(R,\circ,[\ ,\ ])\end{array}}
}
\notag \mlabel{di:postlie}
\end{equation}

To summarize, we have the following refinement of the above commutative diagram.
\begin{equation}
\xymatrix{
{\begin{array}{c}\text{Rota-Baxter algebras}\\ \text{of weight 1\ } (R,\opa,P) \end{array}} \ar[rrr]^{[x,y]:=x\opa y-y\opa x}
\ar[dd]_{\tiny{\begin{array}{l}x\opa y:=x\opa y\\
x\opb y:=x\opa y\end{array}}} &&&
{\begin{array}{c}\text{Rota-Baxter Lie algebras} \\ \text{of weight 1\ } (R,[\ ,\ ],P) \end{array}}
\ar[dd]_{[x,y]_1:=[x,y]}^{[x,y]_2:=[x,y]} \\
&&&\\
{\begin{array}{c}\text{Rota-Baxter TC dialgebras}\\ \text{of weight 1\ } (R,\opa,\opb,P) \end{array}} \ar[rrr]^{[x,y]_1:=x\opa y-y\opa x}_{[x,y]_2:=x\opb y-y\opb x}
\ar[dd]_{\tiny{\begin{array}{l}x\prec y:=x\opa P(y)\\
x\succ y:=P(x)\opa y\\ x \cdot y:=x\opb y\end{array}}} &&&
{\begin{array}{c}\text{Rota-Baxter TC Lie dialgebras} \\ \text{of weight 1\ } (R,[\ ,\ ]_1,[\ ,\ ]_2,P) \end{array}}
\ar[dd]_{x\circ y:=[P(x),y]_1}^{[x,y]:=[x,y]_2} \\
&&&\\
{\begin{array}{c} \text{Tridendriform algebras} \\ (R,\prec,\succ,\cdot)\end{array}} \ar[rrr]^{x\circ y:=x\succ y-y\prec x}_{[x,y]:=x\cdot y-y\cdot x} &&& {\begin{array}{c}\text{PostLie algebras}\\ (R,\circ,[\ ,\ ])\end{array}}
}
\notag
%\mlabel{di:postlie}
\end{equation}
Here the top two vertical maps are inclusions of categories and $TC$ is the abbreviation of totally compatible.

\smallskip

\noindent
{\bf Acknowledgement }
C. Bai would like to thank the support by NSFC (10920161) and SRFDP
(200800550015). L. Guo thanks NSF grant DMS-1001855 for support.

\end{document}